\documentclass[12pt,a4paper]{paper}
\usepackage[utf8]{inputenc}

\usepackage{amsmath}
\usepackage{amsfonts}
\usepackage{amssymb}
\usepackage{graphicx}
\usepackage{amsthm} 
\usepackage{float}
\usepackage{fancyhdr}
\usepackage{enumitem}
\usepackage{color}

\linespread{1.2}

\usepackage[a4paper, hmargin=2.75cm, vmargin={4cm, 4cm}]{geometry}

\usepackage[hidelinks]{hyperref}

\usepackage[nobysame]{amsrefs}

\pagestyle{headings}
\fancyhf{}

\newtheorem{theorem}{Theorem}
\newtheorem{theorema}{Theorem}
\newtheorem{lemma}{Lemma}

\newtheorem{claim}{Claim}

\newcommand{\pr}{\mathbb{P}}
\newcommand{\E}{\mathbb{E}}

\newcommand\inner[2]{\langle #1, #2 \rangle}

\newcommand{\RomanNumeralCaps}[1]
{\MakeUppercase{\romannumeral #1}}

\theoremstyle{definition}
\newtheorem{definition}{Definition}
\newtheorem*{remark}{Remark}

\title{Recovering the lattice from its random perturbations}
\author{Oren Yakir \thanks{Supported by ISF Grants 382/15, 1903/18 and by ERC Advanced Grant 692616} \\ {\small School of Mathematical Sciences, Department of Pure Mathematics, Tel Aviv University} \\ {\small Email: {\tt oren.yakir@gmail.com}}  }
\date{}
\begin{document}
	
	\maketitle
	
	\begin{abstract}
		Given a $d$-dimensional Euclidean lattice we consider the random set obtained by adding an independent Gaussian vector to each of the lattice points. In this note we provide a simple procedure that recovers the lattice from a single realization of the random set. 
	\end{abstract}
		
	\section{Introduction and the main result}
	
	Let $\mathcal{L}\subset \mathbb{R}^d$ be a $d$-dimensional lattice and let $\mathcal{D}$ be its fundamental domain. We assume that $m_{d}(\mathcal{D}) = 1$, where $m_d$ is the Lebesgue measure in $\mathbb{R}^d$. Let $\left\{\xi_n\right\}_{n\in\mathcal{L}}$ be independent and identically distributed random vectors in $\mathbb{R}^d$, all with common probability law $\xi$ and let $\left(\Omega,\mathcal{F},\pr\right)$ be the probability space on which they are defined. 
	
	We study the random point process $W= W(\mathcal{L},\xi)$ given by
	\begin{equation}
		\label{eq:definition_of_the_process}
		W := \left\{n+\xi_n \mid n\in\mathcal{L} \right\}
	\end{equation}
	and address the following recovery problem: \textit{Given a realization of the random set $W$, is it possible to determine (with probability one) what is the underlying lattice $\mathcal{L}$?} To formulate our result, we use the standard notation $e(t):=\exp\left(2\pi i t\right)$. We consider the random exponential sum
	\begin{equation}
		\label{eq:fourier_transform_of_random_averging_measure}
		M_R (\lambda) := \frac{1}{m_d(B_R)} \sum_{w\in W\cap B_R} \overline{e(\inner{w}{\lambda})},
	\end{equation}
	where $B_{R}:=\left\{|x|\leq R\right\}$. 
	Recall that the \emph{dual lattice} to $\mathcal{L}$ is given by
	\[
		\mathcal{L}^\ast := \left\{m\in \mathbb{R}^d \mid \forall n\in \mathcal{L}, \ \inner{n}{m}\in \mathbb{Z} \right\}.
	\]
	Then $\mathcal{L}^\ast$ is also a lattice and $\mathcal{L} = \left(\mathcal{L}^\ast\right)^\ast$.
	
	\begin{theorem}
		\label{thm:random_psf}
		Suppose that $W$ and $M_R$ are given by (\ref{eq:definition_of_the_process}) and (\ref{eq:fourier_transform_of_random_averging_measure}). Assume that there exist some $\varepsilon>0$ such that
		\begin{equation}
		\label{eq:moment_condition_for_xi}
		\E\left[|\xi|^{d+\varepsilon}\right]<\infty.
		\end{equation} 
		Then, almost surely, for all $\lambda\in\mathbb{R}^d$, we have
		\begin{equation}
		\label{eq:random_psf}
		\lim_{R\to\infty} M_R (\lambda) =  \begin{cases}
		\varphi_\xi(\lambda) & \lambda \in \mathcal{L}^{\ast}, \\ 0 & \lambda\not\in \mathcal{L}^\ast,
		\end{cases}
		\end{equation}
		where $\varphi_\xi(\lambda) := \E\left[\overline{e(\inner{\xi}{\lambda})}\right]$ is the characteristic function of the random vector $\xi$.
	\end{theorem}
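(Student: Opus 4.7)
The plan is to split $M_R(\lambda)$ into a sum indexed by lattice points plus a boundary correction, and then separate this sum into its deterministic expectation and a random fluctuation around it. Concretely, I would write
\[
M_R(\lambda) = \frac{1}{m_d(B_R)} \sum_{n \in \mathcal{L} \cap B_R} \overline{e(\inner{n+\xi_n}{\lambda})} + E_R(\lambda),
\]
where $E_R(\lambda)$ gathers the lattice points $n$ for which exactly one of $n$ and $n+\xi_n$ lies in $B_R$. Since every exponential has modulus one, $|E_R(\lambda)|$ is bounded uniformly in $\lambda$ by $2m_d(B_R)^{-1}$ times the size of this boundary set. A lattice point $n$ at distance $t$ from $\partial B_R$ contributes only if $|\xi_n|>t$, which by the moment hypothesis \eqref{eq:moment_condition_for_xi} has probability at most $C/t^{d+\varepsilon}$; summing over shells of width one produces an expected boundary size of order $R^{d-1}=o(m_d(B_R))$, and a Markov $+$ Borel--Cantelli argument along $R_k=2^k$ drives $E_R$ to $0$ almost surely.

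For the main sum I would factor $\overline{e(\inner{n+\xi_n}{\lambda})} = e(-\inner{n}{\lambda})\cdot\overline{e(\inner{\xi_n}{\lambda})}$ and decompose it as
\[
\varphi_\xi(\lambda)\cdot\frac{1}{m_d(B_R)}\sum_{n\in\mathcal{L}\cap B_R} e(-\inner{n}{\lambda}) \;+\; \frac{1}{m_d(B_R)}\sum_{n\in\mathcal{L}\cap B_R} e(-\inner{n}{\lambda})\, Y_n,
\]
with $Y_n := \overline{e(\inner{\xi_n}{\lambda})} - \varphi_\xi(\lambda)$ i.i.d., centered and bounded by $2$. The deterministic piece equals $\#(\mathcal{L}\cap B_R)/m_d(B_R) \to 1$ when $\lambda\in\mathcal{L}^*$ (all phases are $1$) and is $o(1)$ otherwise, by a classical lattice equidistribution / Poisson summation estimate for the exponential sum of a lattice against the indicator of a ball. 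The random piece has variance $O(m_d(B_R)^{-1})$, so Hoeffding's inequality and Borel--Cantelli along $R_k = 2^k$ give pointwise a.s.\ convergence to $0$. Combined with the boundary bound, this establishes \eqref{eq:random_psf} at each fixed $\lambda$.

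The remaining and most delicate step is upgrading ``a.s.\ for each $\lambda$'' to a single almost sure event on which the convergence holds for \emph{every} $\lambda\in\mathbb{R}^d$, which is subtle because the limit in \eqref{eq:random_psf} is discontinuous on $\mathcal{L}^*$. I would treat the two regimes separately: the countable set $\mathcal{L}^*$ is handled by intersecting the pointwise null sets, while $\mathbb{R}^d\setminus\mathcal{L}^*$ is exhausted by compacta $K$ with $\mathrm{dist}(K,\mathcal{L}^*)>0$, on which I aim for \emph{uniform} convergence to $0$. The key quantitative tool is that $M_R$ is $\lambda$-Lipschitz with constant at most $2\pi R\cdot\#(W\cap B_R)/m_d(B_R) = O(R)$ almost surely; an $R^{-2}$-net on $K$ therefore has $O(R^{2d})$ centers, and Hoeffding together with a union bound, summed along $R_k=2^k$, annihilates $M_R$ simultaneously at all centers. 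Lipschitz interpolation adds only an $O(R^{-1})$ error, and a routine oscillation estimate along a finer subsequence $(1+\eta)^k$, $\eta\downarrow 0$, promotes the subsequential statement to all $R$. This final step---reconciling a discontinuous limit with $\lambda$-uniform control---is the crux of the argument, and the Lipschitz-plus-net strategy is tailored to overcome it.
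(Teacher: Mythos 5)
Your proposal is correct in outline but takes a genuinely different route from the paper's. Both arguments share the two opening reductions: discarding the boundary discrepancy between $\{w\in W: |w|\le R\}$ and $\{n+\xi_n : n\in\mathcal{L}\cap B_R\}$ (the paper's Lemma~\ref{lemma:boundry_bound_BC}, proved there by the pathwise Borel--Cantelli bound $|\xi_n|<|n|^{1-\delta}$ for all but finitely many $n$, which controls \emph{all} radii at once with no dyadic interpolation), and splitting the lattice sum into $\varphi_\xi(\lambda)$ times the deterministic exponential sum plus the fluctuation $\frac{1}{m_d(B_R)}\sum_n \overline{e(\inner{n}{\lambda})}A_\lambda(n)$. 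The divergence is in how the fluctuation is killed uniformly in $\lambda$. The paper is soft: it shows $\{A_\lambda(n)\}$ is almost surely a Wiener sequence whose spectral measure is a multiple of Lebesgue measure on $\mathcal{D}$, while $\{e(\inner{n}{\lambda})\}$ has a point mass as its spectral measure, and invokes mutual singularity via the Coquet--Kamae--Mend\`{e}s France affinity inequality (Lemma~\ref{lemma:upper_bound_for_mix_spectral_measures_ac}); uniformity in $\lambda$ comes from applying the dense-set-plus-Lipschitz device not to $M_R$ but to the correlation functions $F_{R,k}(\lambda)$, whose gradients are bounded \emph{uniformly in $R$} by the strong law, so no quantitative tail bound is ever needed. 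You instead argue quantitatively: Hoeffding tails $e^{-c\delta^2R^d}$ comfortably beat the $O(R^{2d})$ points of an $R^{-2}$-net needed to absorb the $O(R)$ Lipschitz constant of $M_R$, and a $(1+\eta)^k$ interpolation finishes. Your route is more elementary and yields rates; the paper's buys generality---as its closing remark notes, the spectral argument survives when independence is weakened to mixing, where Hoeffding is unavailable. Two points to tighten: (i) the deterministic sum $\frac{1}{m_d(B_R)}\sum_{n\in\mathcal{L}\cap B_R}\overline{e(\inner{n}{\lambda})}$ is non-random, so its pointwise convergence already holds surely for every $\lambda$ simultaneously and needs no uniformity; the net argument is only required for the random pieces, and there arbitrary compacta suffice---separation from $\mathcal{L}^*$ is not needed. (ii) Your boundary estimate is run along $R_k=2^k$, but the count $\#\{n:|n|\le R,\ |n+\xi_n|>R\}$ does not interpolate across a factor-$2$ gap, since the annulus $B_{2R_k}\setminus B_{R_k}$ already contains order $R_k^d$ lattice points; the $(1+\eta)^k$ refinement you invoke for the last step must be used here too, or one can simply adopt the paper's pathwise bound, which sidesteps the issue.
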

	\subsubsection*{Several Remarks on Theorem \ref*{thm:random_psf}}
	\begin{enumerate}[label=\bf{\arabic*}.]
		\item The moment condition \eqref{eq:moment_condition_for_xi} is probably not sharp for \eqref{eq:random_psf} to hold, and is merely of technical convenience. Still, in the view of Lemma \ref{lemma:boundry_bound_BC} (see Section \ref{sec:fourier_averging_random_set}), some moment condition should be expected. Throughout the paper we will assume that \eqref{eq:moment_condition_for_xi} holds without stating it explicitly in the different results. We further note that the moment condition \eqref{eq:moment_condition_for_xi} guarantees that the sum in \eqref{eq:fourier_transform_of_random_averging_measure} is almost surely finite and hence the function $M_{R}(\lambda)$ is well defined.
		
		\item If $\xi$ is such that $\varphi_\xi(\lambda)\not= 0$ for all $\lambda \in \mathbb{R}^d$ (in particular, if $\xi$ is Gaussian), then Theorem \ref{thm:random_psf} gives rise to a procedure that almost surely recovers $\mathcal{L}$. More accurately, let $\mathcal{B}(\mathbb{R}^d)$ be the Borel sigma-algebra on $\mathbb{R}^d$ and let $\Theta$ be the set of all locally finite (i.e. finite intersection with compacts) subsets of $\mathbb{R}^d$. Endow $\Theta$ with $\mathcal{G}$ which is the smallest sigma-algebra such that all maps
		\[
		n_\theta: \mathcal{B}(\mathbb{R}^d) \to \mathbb{Z}_{\ge 0} \cup \{\infty\}, \quad  n_\theta(B):= \#\left(\theta\cap B\right), 
		\]
		are measurable for all $\theta \in \Theta$. $\left(\Theta,\mathcal{G}\right)$ is a measurable space and in fact
		\[
		W: \Omega \to \Theta 
		\] 
		is a measurable map. By considering the map $T:\Omega\times \mathbb{R}^d\to \mathbb{C}$ given by $$T(\omega,\lambda):=\limsup_{R\rightarrow\infty}M_R(\lambda)$$ 
		we conclude from Theorem \ref{thm:random_psf} that if $\varphi_\xi(\lambda) \not=0$ for all $\lambda$ then
		\begin{equation*}
		\label{eq:recovering_the_lattice_if_non_vanishing_char}
		\pr\left(\left\{\lambda \in \mathbb{R}^d \mid T\left(\omega,\lambda\right) \not= 0 \right\} = \mathcal{L}^\ast\right) = 1.
		\end{equation*}
		We end the introduction with some simulations that demonstrates this recovery method.
		
		\item We do not assume in Theorem \ref{thm:random_psf} that $\xi$ has zero expectation. This means that we can also recover $\mathcal{L}$ from a random set of the form
		\[
		\widetilde{W}:=\left\{n + c + \xi_n \mid n\in \mathcal{L} \right\}
		\]
		where $\{\xi_n\}$ are i.i.d. random vectors and $c\in \mathbb{R}^d$ is an arbitrary (non-random) vector . The only difference is that the limiting function in (\ref{eq:random_psf}) is multiplied by $e(-\inner{\lambda}{c})$.
		
		\item The normalization assumption $m_d(\mathcal{D}) = 1$ is not essential. It would be clear from the proof that if we do not normalize the the limiting function in (\ref{eq:random_psf}) is multiplied by $(m_d(\mathcal{D}))^{-1}$.
		
	\end{enumerate}
	
	Notice that a simple application of Birkhoff ergodic theorem combined with Fubini theorem implies that for each $\lambda\in\mathbb{R}^d$ there exist an event $E_\lambda \in \mathcal{F}$ with $\pr(E_\lambda) = 0$ such that relation (\ref{eq:random_psf}) holds for all $\omega\in \Omega\setminus E_\lambda$. The point of Theorem \ref{thm:random_psf} is that we may choose a single event $E\in \mathcal{F}$, $\pr(E) = 0$, such that for all $\omega\in \Omega\setminus E$ relation (\ref{eq:random_psf}) holds for all $\lambda\in \mathbb{R}^d$ at the same time. This type of ``uniformity" result is closely related to the Wiener-Wintner theorem, first appearing in the celebrated paper \cite{wiener_wintner}. 
	
	\begin{theorema}[{\cite{wiener_wintner}}]
		\label{thm:wiener_wintner_thm}
		Suppose that $\tau$ is a measure-preserving transformation of a measure space $S$ with finite measure. If $f$ is a complex-valued integrable function on $S$ then there exists a measure zero set $E$ such that the limit
		\[
		\lim_{N \to \infty} \frac{1}{2N+1} \sum_{j=-N}^{N}e^{ijt} f(\tau^{j}s)
		\] 
		exists for all real $t$ and for all $s\not\in E$.
	\end{theorema}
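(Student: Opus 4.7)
My plan is to invoke the spectral theorem for the Koopman operator $U:g\mapsto g\circ\tau$ on $L^{2}(S)$. After normalizing $S$ to be a probability space, Wiener's dominated ergodic maximal inequality for the Cesàro averages $A_{N}f:=\tfrac{1}{2N+1}\sum_{j=-N}^{N}f\circ\tau^{j}$ yields the weak-$L^{1}$ bound $\bigl\|\sup_{N}|A_{N}f|\bigr\|_{1,\infty}\lesssim\|f\|_{1}$, and together with a density argument this lets us assume $f\in L^{2}(S)$. Set
\[
A_{N}(t)f(s):=\frac{1}{2N+1}\sum_{j=-N}^{N}e^{ijt}f(\tau^{j}s),
\]
and note the trivial pointwise bound $|A_{N}(t)f(s)|\leq A_{N}|f|(s)$, valid uniformly in $t$.

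Next I would decompose $L^{2}(S)=\mathcal{H}_{\mathrm{pp}}\oplus\mathcal{H}_{\mathrm{c}}$, where $\mathcal{H}_{\mathrm{pp}}$ is the closed linear span of the eigenfunctions of $U$. For an eigenfunction $U\phi=e^{i\alpha}\phi$ the computation
\[
A_{N}(t)\phi(s)=\phi(s)\cdot\frac{1}{2N+1}\sum_{j=-N}^{N}e^{ij(t+\alpha)}
\]
shows that the right-hand side converges pointwise in $t$ for every $s$: to $\phi(s)$ when $t+\alpha\in 2\pi\mathbb{Z}$, to zero otherwise. Approximating $f\in\mathcal{H}_{\mathrm{pp}}$ in $L^{2}$ by finite linear combinations $f_{K}$ of eigenfunctions and using the uniform maximal bound $\sup_{N,t}|A_{N}(t)(f-f_{K})(s)|\leq A^{*}|f-f_{K}|(s)$ produces a single null set outside of which $\lim_{N}A_{N}(t)f$ exists for all $t\in\mathbb{R}$.

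For $f\in\mathcal{H}_{\mathrm{c}}$ the associated spectral measure $\sigma_{f}$ on the circle is non-atomic, and the spectral theorem gives the identity
\[
\|A_{N}(t)f\|_{L^{2}(S)}^{2}=\int_{\mathbb{T}}K_{N}(\theta+t)\,d\sigma_{f}(\theta),\qquad K_{N}(\phi):=\frac{1}{(2N+1)^{2}}\Bigl|\sum_{j=-N}^{N}e^{ij\phi}\Bigr|^{2}.
\]
Since $K_{N}$ is a non-negative approximate identity at $0$ and $\sigma_{f}$ has no atoms, the right-hand side tends to $0$ as $N\to\infty$, \emph{uniformly in} $t$. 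To upgrade this $L^{2}$-bound to almost-sure convergence that holds for every $t$ simultaneously, one passes to a sub-sequence (e.g.\ $N_{k}=2^{k}$), applies Chebyshev and Borel--Cantelli at a countable dense set of $t$, and then uses a covering argument together with the uniform maximal bound to close the gaps in both $t$ and $N$.

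The main obstacle — and the real content of the theorem beyond Birkhoff's pointwise ergodic theorem — is exactly the last step: producing a \emph{single} null set on which convergence holds simultaneously for all $t\in\mathbb{R}$. For each fixed $t$, Birkhoff applied to the skew product $(S\times\mathbb{T},\tau\times R_{t})$ already yields a.s.\ convergence, but combining over uncountably many $t$ requires the quantitative spectral estimates above; the pure-point contribution is essentially algebraic, while the substance of the argument lies in the continuous-spectrum component, where non-atomicity of $\sigma_{f}$ must be exploited uniformly in $t$.
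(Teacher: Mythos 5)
First, a point of reference: the paper does not prove this statement at all --- Theorem \ref{thm:wiener_wintner_thm} is quoted from Wiener--Wintner with a pointer to Assani's book for proofs, and the paper's own machinery (Wiener sequences, the affinity Lemma \ref{lemma:upper_bound_for_mix_spectral_measures_ac} of Coquet--Kamae--Mend\`es France, the Bellow--Losert argument) is only an analogue of it. So your proposal can only be judged on its own merits. The overall architecture you choose (reduce to $L^2$ via the maximal inequality, split $L^2=\mathcal{H}_{\mathrm{pp}}\oplus\mathcal{H}_{\mathrm{c}}$, handle eigenfunctions explicitly, kill the continuous part) is the classical route, and the first three steps are sound: the domination $|A_N(t)f|\le A_N|f|$ uniformly in $t$ makes the Banach-principle/density argument work, and the pure-point part is correct as written.

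The genuine gap is in the continuous-spectrum part, at exactly the step you flag as ``the real content.'' The uniform decay $\sup_t\|A_N(t)f\|_{L^2}^2=\sup_t\int K_N(\theta+t)\,d\sigma_f(\theta)\to 0$ is correct, but your plan to upgrade it --- Chebyshev and Borel--Cantelli over a countable dense set of $t$, then a covering argument to close the gaps in $t$ --- does not go through. The function $t\mapsto A_N(t)f(s)$ has Lipschitz constant of order $N\cdot A_N|f|(s)$, so a fixed countable dense set of frequencies is useless and closing the gap in $t$ at scale $N$ requires a net of roughly $N$ points; the resulting union bound needs $N_k\sup_t\|A_{N_k}(t)f\|_2^2$ to be summable, whereas for a general non-atomic $\sigma_f$ the decay of $\int K_N(\theta+t)\,d\sigma_f(\theta)$ can be arbitrarily slow (e.g.\ $\sigma_f(B(x,r))\sim 1/\log(1/r)$ gives decay $\sim 1/\log N$), so no choice of $N_k$ makes the net argument close. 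The ``uniform maximal bound'' cannot substitute here: it controls the size of $A_N(t)g$ uniformly in $t$, not its oscillation in $t$. The standard repair is to bound $\sup_t|A_N(t)f(s)|^2$ \emph{before} taking any norm, via the van der Corput inequality, by
\[
\sup_{t}\left|A_N(t)f(s)\right|^2\;\lesssim\;\frac{1}{H}\sum_{|h|\le H}\left|\frac{1}{2N+1}\sum_{|j|\le N} f(\tau^{j+h}s)\overline{f(\tau^{j}s)}\right|+o_{N\to\infty}(1),
\]
in which $t$ has disappeared; Birkhoff applied to each $h$ sends the inner averages to $\widehat{\sigma_f}(h)$ on a single null set, and Wiener's lemma ($\frac{1}{H}\sum_{|h|\le H}|\widehat{\sigma_f}(h)|\to0$ for non-atomic $\sigma_f$) finishes. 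This is precisely the role played in the present paper by the correlation sequences $\gamma_u$ and Lemma \ref{lemma:upper_bound_for_mix_spectral_measures_ac}; without some such device your argument does not produce a single null set working for all $t$.
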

	
	 The case $t=0$ in Theorem \ref{thm:wiener_wintner_thm} is essentially the Birkhoff ergodic theorem. For (many) different proofs of Theorem \ref{thm:wiener_wintner_thm} and possible extensions see the book \cite{assani}. Although we do not use directly the Wiener-Wintner theorem in our proof of Theorem \ref{thm:random_psf}, the connection is evident. In particular, a key step towards proving Theorem \ref{thm:random_psf} is to introduce the notion of sequences having correlations (to be defined later) and study their spectral properties. This notion was originally introduced by N. Wiener \cite{wiener}*{Chapter~\RomanNumeralCaps{4}}.
	 
	 Considering random displacements of given lattice points is a natural model which appears in several physical context. Probably the most well-known example is thermal motions in the Einstein approximation of a solid. We refer the reader to \cite[Section~5]{baake_grimm} for a survey of previous results on the perturbed lattice from the mathematical physics point of view. We also mention a result by Hof \cite{hof}, where the diffraction of the random measure associated with the set $W$ was computed, and as in our work, the self averaging of the infinite system implies almost sure results. Another nice instance in the physics literature appears in the work of Gabrielli, Joyce and Sylos Labini \cite{gabrielli_joyce_syloslab} (see also the book \cite{gabrielli_joyce_syloslab_pietroneor}), where independent displacements of the lattice points appears as a cosmological model (see section ``the shuffled lattice" therein).
	
	Mathematically, random perturbations of lattice points is a natural example of a super-homogeneous point process. That is, random point sets where the variance of the number of points in a domain $V$ grows slower than the volume of $V$, see \cite{ghosh_lebowitz}. The notion of super-homogeneous point process (which are sometimes called \emph{hyperuniform}) was introduced by Stillinger and Torquato in \cite{torquato_stillinger}. Sodin and Tsirelson \cite{sodin_tsirelson} considered Gaussian perturbations of the lattice points as a toy-model for the more involved super-homogeneous point process, obtained by considering the zero set of an analytic function whose Taylor coefficients are independent (complex) Gaussian random variables. In the context of recovery problems, Peres and Sly \cite{peres_sly} proved that given we know the underlying lattice $\mathcal{L}$, the problem of detecting whether or not a point was deleted from $W$ is much less tractable. In particular, they proved that if $\xi_n$ are mean-zero Gaussian random vectors with independent components, each of variance $\sigma^2$, then for $d\ge 3$ and $\sigma=\sigma(d)$ large enough it is impossible to detect whether a point was deleted, while for small $\sigma$ such a detection is possible. In the work \cite{yakir}, we study the mean and fluctuations of linear statistics of the point process $W$ in the case of Gaussian perturbations.
	
	After the completion of this paper, we have learned of the recent work by Klatt, Kim and Torquato \cite{klatt_kim_torquato} which is closely related to our work, and in some sense, complements it. There, a formula similar to \eqref{eq:random_psf} is derived at the level of expectations. The main concern in \cite{klatt_kim_torquato} is when $\varphi_{\xi}$ vanishes on the dual lattice $\mathcal{L}^\ast$ (in their terminology, when the process is \emph{cloaked}) and comparing different metrics which measure this ``vanishing" of periodicity.
	
	\subsection{Simulations}
	We present here some numerical simulations to illustrate the recovery method we suggest. A natural example to consider is when the perturbations of the lattice points are (symmetric) bivariate Gaussian vectors with a dispersion parameter $a>0$, i.e,
	
	\begin{equation}
	\label{eq:gaussian_density}
	{\rm d} \xi(x) = \frac{1}{\pi a} \exp(-|x|^2/a)\cdot {\rm d} m_2(x) .
	\end{equation} 
	The characteristic function in this case is given by $\varphi_{\xi}(\lambda) = \exp(-a\pi^2|\lambda|^2)$. We will work with the following lattices in $\mathbb{R}^2$ given by
	\[
	\mathcal{L}_{1} := \mathbb{Z}^2, \qquad \mathcal{L}_{2}:= A\cdot\mathbb{Z}^2, \qquad 
	\text{where } A= \begin{pmatrix}
	2 & 1/2 \\ 0 & 1/2
	\end{pmatrix}.
	\]
	We generate two (independent) processes $W_1$ and $W_2$ as follows,
	\begin{equation*}
	W_j:= \left\{n+ \xi_n^j \mid n\in \mathcal{L}_j \right\} \qquad \text{for } j=1,2,
	\end{equation*}
	where $\{\xi_n^1\}$ and $\{\xi_n^2\}$ are independent bivariate Gaussian random vectors given by (\ref{eq:gaussian_density}). 
	
	\begin{figure}[H]
		\begin{center}
			\scalebox{1}{\includegraphics{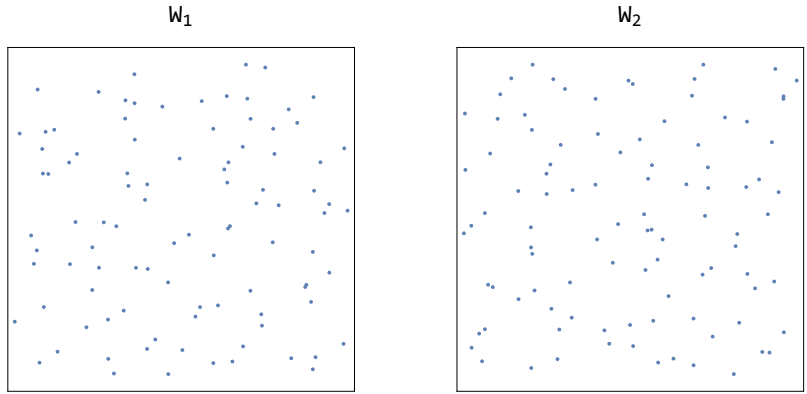}}
			\caption{Realizations of the process $W_1$ and $W_2$ in the box $[-5,5]^2$ with $a=0.1$.}
			\label{fig:realizations_of_W_j}
		\end{center}
	\end{figure}

	Let $M_R^{j}(\lambda)$ be the random exponential sum (\ref{eq:fourier_transform_of_random_averging_measure}), which corresponds to the different $W_j$'s. We will consider the random sets given by
	\begin{align*}
		X_{R,\beta}^{j} := \left\{\lambda\in \mathbb{R}^2 \mathrel{\bigg|} \frac{1}{\pi R^2} \sum_{w\in W_j\cap B_R} \cos\left(2\pi \inner{w}{\lambda} \right) > \beta  \right\} = \left\{\lambda\in \mathbb{R}^2 \mid \text{Re}\left( M_R^{j}(\lambda)\right) > \beta \right\}
	\end{align*}
	
	\begin{figure}[H]
		\begin{center}
			\scalebox{1}{\includegraphics{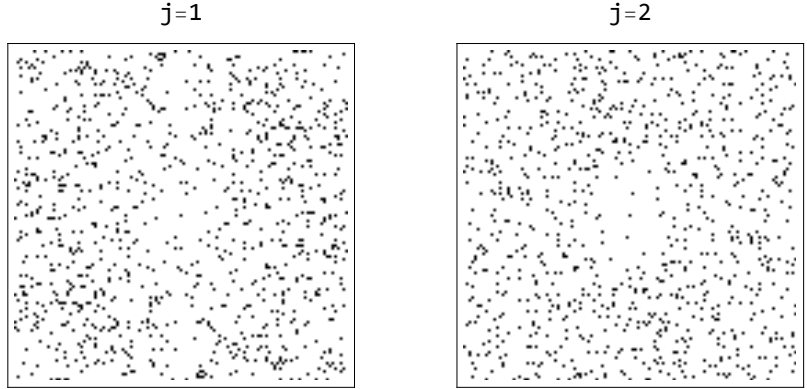}}
			\caption{Realizations of the set $X_{R,\beta}^{j}$ with $a=0.1$, $\beta=0.007$ and $R=100$.}
			\label{fig:realizations_of_X_100_beta_j}
		\end{center}
	\end{figure}

	for $j=1,2$ and $\beta\in(0,1)$. Theorem \ref{thm:random_psf} asserts that, almost surely, the set $X_{R,\beta}^{j}$ converges as $R\to\infty$ to the set of all points $\lambda\in \mathcal{L}_j^{\ast}$ such that $\text{Re}\left(\varphi_\xi(\lambda)\right)>\beta$. In the Gaussian case this is simply the set
	\begin{equation*}
	\label{eq:recovering_the_lattice_gaussian}
	\mathcal{L}_{j}^{\ast} \cap \left\{\lambda : |\lambda|< \sqrt{-\log \beta/a\pi^2} \right\}. 
	\end{equation*}

	In Figures \ref{fig:realizations_of_X_100_beta_j} and \ref{fig:realizations_of_X_250_beta_j} we demonstrate this convergence by plotting the set $X_{R,\beta}^{j}$ with the same realizations of the $W_j$'s from Figure \ref{fig:realizations_of_W_j}.
	
	\begin{figure}[H]
		\begin{center}
			\scalebox{1}{\includegraphics{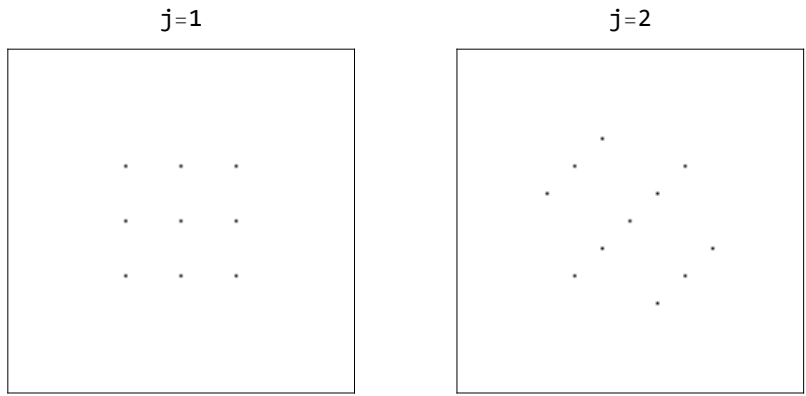}}
			\caption{Realizations of the set $X_{R,\beta}^{j}$ with $a=0.1$, $\beta=0.007$ and $R=250$.}
			\label{fig:realizations_of_X_250_beta_j}
		\end{center}
	\end{figure}
	
	\subsubsection*{Acknowledgments}
	
	I am deeply grateful to my advisors, Alon Nishry and Mikhail Sodin, for their attentive guidance, constant encouragement and for many stimulating discussions. I also thank the anonymous referee for spotting a gap in Lemma \ref{lemma:boundry_bound_BC} and for many helpful comments. 
	
	\section{Wiener sequences and their properties}
	
	Let $u:\mathcal{L}\to \mathbb{C}$ be a sequence indexed by a given lattice $\mathcal{L}$. We start with some classical results regarding sequences having correlations which we will use later when proving Theorem \ref{thm:random_psf}. We refer the interested reader to the book by Queff{\'e}lec \cite[Chapter~4]{queffelec} for a more elaborate introduction to the theory of Wiener sequences. 
	
	\begin{definition}
		We say that $u:\mathcal{L}\to \mathbb{C}$ is a \emph{Wiener sequence} if for any $k\in\mathcal{L}$ there exists the limit
		\begin{equation}
		\label{eq:correlation_of_seq_u}
		\gamma_{u}(k):= \lim_{R\to\infty} \frac{1}{m_d\left(B_{R}\right)} \sum_{n\in\mathcal{L}\cap B_R} u(n)\overline{u(n+k)}.
		\end{equation}
	\end{definition}
	We call $\gamma_{u}:\mathcal{L}\to \mathbb{C}$ the \emph{correlation sequence} of the Wiener sequence $u$. Any Wiener sequence $u$ gives rise to a (canonical) Borel measure supported on $\mathcal{D}$ which we denote by $\mu_u$. It is constructed as follows. For any $k_j,k_\ell \in \mathcal{L}$ we have
	\begin{align*}
	\gamma_{u}\left(k_\ell - k_{j}\right) &= \lim_{R\rightarrow\infty} \frac{1}{m_d\left(B_{R}\right)} \sum_{n\in\mathcal{L}\cap B_R} u(n)\overline{u(n+k_\ell-k_j)} \\ &= \lim_{R\rightarrow\infty} \frac{1}{m_d\left(B_{R}\right)} \sum_{n\in\mathcal{L}\cap B_R} u(n-k_\ell)\overline{u(n-k_j)},
	\end{align*}
	For all complex numbers $c_0,\ldots,c_m$ we obtain
	\begin{align*}
	\sum_{0\leq \ell,j \leq m} c_\ell \overline{c_{j}} \gamma_{u}\left(k_\ell - k_{j}\right) & = \lim_{R\to\infty} \frac{1}{m_d(B_R)}\sum_{n\in\mathcal{L}\cap B_R} \left(\sum_{0\leq \ell,j \leq m} c_\ell \overline{c_{j}} u(n-k_\ell)\overline{u(n-k_j)}\right) \\ &= \lim_{R\to\infty}  \frac{1}{m_d(B_R)} \sum_{n\in\mathcal{L}\cap B_R} \left| \sum_{\ell=1}^{m} c_\ell u(n-k_\ell) \right|^2 \geq 0.
	\end{align*}
	Hence, by the Bochner-Herglotz theorem, there exist a unique positive Borel measure $\mu_u$ on $\mathcal{D}$ (the dual group to $\mathcal{L}$) such that $\widehat{\mu}_{u}(k) = \gamma_u(k)$ for all $k\in \mathcal{L}$. We will refer to $\mu_u$ as the \emph{spectral measure} of the Wiener sequence $u$. The following lemma will be key in the proof of Theorem \ref{thm:random_psf}. 
	
	\begin{lemma}
		\label{lemma:upper_bound_for_mix_spectral_measures_ac}
		Let $u,v$ be Wiener sequences and let $\mu_u,\mu_v$ be their spectral measures, respectively. Suppose that $\mu_u$ and $\mu_v$ are mutually singular, then
		\[
		\lim_{R\rightarrow\infty} \frac{1}{m_d(B_R)} \sum_{n\in  \mathcal{L}\cap B_R} u(n)\overline{v(n)} = 0.
		\]
	\end{lemma}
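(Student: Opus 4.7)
The plan is to exploit the mutual singularity $\mu_u\perp\mu_v$ by approximating a separating indicator function by a trigonometric polynomial, and then use a shift identity to transfer that polynomial between $u$ and $v$ inside the bilinear sum so that a single Cauchy--Schwarz becomes arbitrarily effective. Concretely, first fix a Borel set $E\subseteq\mathcal{D}$ with $\mu_u(E^c)=0$ and $\mu_v(E)=0$. Since the $\mathcal{L}$-trigonometric polynomials $P(\lambda)=\sum_{j=1}^m c_j\,e(\inner{k_j}{\lambda})$ form a $*$-algebra separating points of $\mathcal{D}$, Stone--Weierstrass makes them dense in $C(\mathcal{D})$ and hence in $L^2(\mu_u+\mu_v)$. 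Given $\varepsilon>0$, choose such a $P$ approximating $1_E$ in $L^2(\mu_u+\mu_v)$ so that
\[
\int_{\mathcal{D}}|1-P|^2\,d\mu_u<\varepsilon^2\qquad\text{and}\qquad\int_{\mathcal{D}}|P|^2\,d\mu_v<\varepsilon^2.
\]

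Writing $(P\star u)(n):=\sum_{j=1}^{m} c_j\,u(n-k_j)$, I split $u=(1-P)\star u+P\star u$ and decompose
\[
\sum_{n\in\mathcal{L}\cap B_R}u(n)\overline{v(n)}=\sum_{n}\bigl((1-P)\star u\bigr)(n)\overline{v(n)}+\sum_{n}(P\star u)(n)\overline{v(n)}.
\]
In the second sum, I translate the summation index $n\mapsto n+k_j$ term by term in order to move the polynomial to the other factor,
\[
\sum_{n}(P\star u)(n)\overline{v(n)}=\sum_{n}u(n)\overline{(P^{\sharp}\star v)(n)}+\mathcal{E}_R,\qquad (P^{\sharp}\star v)(n):=\sum_{j=1}^{m}\overline{c_j}\,v(n+k_j),
\]
where $\mathcal{E}_R$ collects those summands falling in the boundary shells $\bigcup_{j}(B_R\triangle(B_R-k_j))\cap\mathcal{L}$. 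The same computation as the one already performed in the excerpt for $\gamma_{u}$ shows that the normalized $L^2$-averages of $(1-P)\star u$ and $P^\sharp\star v$ converge respectively to $\int_{\mathcal{D}}|1-P|^2\,d\mu_u$ and $\int_{\mathcal{D}}|P|^2\,d\mu_v$.

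Applying Cauchy--Schwarz to each of the two (shift-cleaned) sums and combining the limits with $\frac{1}{m_d(B_R)}\sum_{B_R}(|u(n)|^2+|v(n)|^2)\to \gamma_u(0)+\gamma_v(0)$, I obtain
\[
\limsup_{R\to\infty}\biggl|\frac{1}{m_d(B_R)}\sum_{n\in\mathcal{L}\cap B_R}u(n)\overline{v(n)}\biggr|\le\varepsilon\bigl(\sqrt{\gamma_u(0)}+\sqrt{\gamma_v(0)}\bigr),
\]
and sending $\varepsilon\downarrow 0$ finishes the argument. The one technically delicate step is the control of the boundary error $\mathcal{E}_R$ generated by the shifts: each symmetric difference $B_R\triangle(B_R-k_j)$ contains $O(R^{d-1})$ lattice points, so a further Cauchy--Schwarz on that thin shell, together with the uniform $L^2$-bound on $u$ and $v$ on $B_R$, yields $|\mathcal{E}_R|=O(R^{d-1})=o(m_d(B_R))$, hence the boundary does not interfere with the limit.
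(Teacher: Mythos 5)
Your argument is correct in outline, but it is a genuinely different proof from the one in the paper. The paper reduces the statement to the stronger quantitative inequality of Lemma~\ref{lemma:upper_bound_for_mix},
\[
\limsup_{R\to\infty}\left|\frac{1}{m_d(B_R)}\sum_{n\in\mathcal{L}\cap B_R}u(n)\overline{v(n)}\right|\le\rho\left(\mu_u,\mu_v\right),
\]
where $\rho$ is the Hellinger affinity: it introduces the absolutely continuous ``periodogram'' measures ${\rm d}\mu_u^R=\frac{1}{m_d(B_R)}\bigl|\sum_{n\in\mathcal{L}\cap B_R}u(n)e(\inner{n}{x})\bigr|^2{\rm d}m_d$, shows $\mu_u^R\xrightarrow{\ w^\ast\ }\mu_u$ by matching Fourier coefficients, bounds the bilinear sum from above by $\rho(\mu_u^R,\mu_v^R)$, and invokes the upper semicontinuity of the affinity under weak-star convergence (Theorem~\ref{thm:weak_star_measures_bound_affiliation}, after Coquet--Kamae--Mend\`es France). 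Your route --- choosing a Borel set $E$ separating the two singular measures, approximating $1_E$ by a trigonometric polynomial $P$ in $L^2(\mu_u+\mu_v)$, splitting $u=(1-P)\star u+P\star u$, shifting the summation index to move $P$ onto $v$, and finishing with Cauchy--Schwarz --- bypasses the affinity machinery and the partition-of-unity argument behind Theorem~\ref{thm:weak_star_measures_bound_affiliation} entirely. The trade-off is that your proof yields only the qualitative conclusion (which is all the lemma asserts), whereas the paper's yields the quantitative affinity bound; yours is the more self-contained and elementary of the two.

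Two points should be tightened. First, the boundary error: you assert $|\mathcal{E}_R|=O(R^{d-1})$, but a Wiener sequence need not be bounded, so counting the $O(R^{d-1})$ lattice points in the shells $B_R\triangle(B_R-k_j)$ does not by itself control $\mathcal{E}_R$. What your Cauchy--Schwarz over the shell actually gives, using only $\frac{1}{m_d(B_R)}\sum_{B_R}|u|^2\to\gamma_u(0)$, is $\sum_{\mathrm{shell}}|u|^2\le\sum_{B_{R+c}}|u|^2-\sum_{B_{R-c}}|u|^2=o(R^d)$, hence $|\mathcal{E}_R|=o(m_d(B_R))$ --- weaker than claimed but still sufficient; the same remark applies to the shifts hidden in the computation of the $L^2$-averages of $(1-P)\star u$ and $P^\sharp\star v$. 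Second, with the convention $(P\star u)(n)=\sum_jc_j\,u(n-k_j)$ and $\widehat{\mu_u}(k)=\gamma_u(k)$, the normalized $L^2$-average of $P\star u$ converges to $\sum_{j,\ell}c_j\overline{c_\ell}\,\gamma_u(k_j-k_\ell)=\int_{\mathcal{D}}\bigl|\sum_j\overline{c_j}\,e(\inner{k_j}{x})\bigr|^2{\rm d}\mu_u$, so the polynomial enters with conjugated coefficients (equivalently, reflected argument). This is harmless --- replace $P$ by $x\mapsto P(-x)$, or conjugate the coefficients in the definition of $\star$ --- but the conventions must be aligned for the displayed $\varepsilon^2$-bounds to be the ones actually used.
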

	
	Lemma \ref{lemma:upper_bound_for_mix_spectral_measures_ac} goes back to Coquet, Kamae and Mend\`{e}s France \cite[Theorem~2]{coquet_kamae_menfrance}, which proved this lemma for the case $\mathcal{L} = \mathbb{Z}$ and $d=1$. For the convenience of the reader, we add a proof of Lemma \ref{lemma:upper_bound_for_mix_spectral_measures_ac} for the general case as stated in appendix \ref{appendix:proof_of_lemma_upper_bound}.
	
	\section{Fourier Averaging of the Random Set}
	\label{sec:fourier_averging_random_set}
	Let $N_{\mathcal{L}}(R) :=\#\left(\mathcal{L}\cap B_R\right)$ be the number of lattice points which fall inside a ball of radius $R\geq 1$ centered at the origin. The classical Gauss-type bounds yields that there exists a constant $C=C(\mathcal{L})>0$ so that
	\begin{equation}
	\label{eq:lattice_points_in_circle_bound}
	\left|N_\mathcal{L}(B_R) - m_d(B_{R})\right| \leq C R^{d-1}
	\end{equation}
	for all $R\ge1$, see \cite[Proposition~1]{baake_moody_pleasants}.
	\begin{lemma}
		\label{lemma:boundry_bound_BC}
		We have that, almost surely,
		\begin{equation*}
		\label{eq:boundry_bound_BC}
		\lim_{R\to\infty} \frac{1}{R^d}\#\left\{n\in \mathcal{L} : |n|\leq R,\ |n+\xi_n|>R \right\} = 0, 
 		\end{equation*}
 		and
 		\begin{equation*}
 			\lim_{R\to\infty} \frac{1}{R^d}\#\left\{n\in \mathcal{L} : |n|> R,\ |n+\xi_n|\leq R \right\} = 0.
 		\end{equation*}
	\end{lemma}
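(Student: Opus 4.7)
The plan is to decompose the set in question into a boundary-annulus piece, controlled deterministically via the Gauss-type estimate \eqref{eq:lattice_points_in_circle_bound}, plus a ``large displacement'' piece, controlled almost surely via a Borel--Cantelli argument that uses the moment bound \eqref{eq:moment_condition_for_xi}.

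For the first assertion I would fix $\eta\in(0,1)$, write $v_d:=m_d(B_1)$, and split according to whether the lattice point $n$ lies in the outer annulus $B_R\setminus B_{(1-\eta)R}$ or deeper inside. Lattice points in the annulus number at most $v_d(1-(1-\eta)^d)R^d+O(R^{d-1})$ by \eqref{eq:lattice_points_in_circle_bound}. In the remaining case, the conditions $|n|\le (1-\eta)R$ and $|n+\xi_n|>R$ force $|\xi_n|>R-|n|\ge \eta R\ge \delta|n|$ with $\delta:=\eta/(1-\eta)$, since $R\ge |n|/(1-\eta)$.

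The crucial input is the following Borel--Cantelli step. Because $\sum_{n\in\mathcal{L}\setminus\{0\}}|n|^{-\alpha}<\infty$ for every $\alpha>d$ (grouping into dyadic annuli, each contributing $\lesssim 2^{k(d-\alpha)}$), Markov's inequality together with \eqref{eq:moment_condition_for_xi} yields
\[
\sum_{n\in\mathcal{L}\setminus\{0\}}\pr\bigl(|\xi_n|>\delta|n|\bigr)\ \le\ \delta^{-(d+\varepsilon)}\,\E\bigl[|\xi|^{d+\varepsilon}\bigr]\sum_{n\in\mathcal{L}\setminus\{0\}}|n|^{-(d+\varepsilon)}\ <\ \infty.
\]
Hence almost surely only finitely many $n\in\mathcal{L}$ satisfy $|\xi_n|>\delta|n|$, so the ``large displacement'' contribution is a.s.\ bounded uniformly in $R$ by a random constant. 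Dividing by $R^d$ gives $\limsup_{R\to\infty}R^{-d}\#\{n:|n|\le R,\,|n+\xi_n|>R\}\le v_d(1-(1-\eta)^d)$ a.s., and intersecting the resulting a.s.\ events over a countable sequence $\eta=1/k\downarrow 0$ produces the first limit.

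The second identity is handled in the same way: if $|n|>(1+\eta)R$ and $|n+\xi_n|\le R$, then $|\xi_n|\ge |n|-R\ge (\eta/(1+\eta))|n|$, so the identical Borel--Cantelli input leaves only finitely many such $n$ almost surely, while the remaining points lie in the annulus $B_{(1+\eta)R}\setminus B_R$, of $\mathcal{L}$-cardinality $v_d((1+\eta)^d-1)R^d+O(R^{d-1})$, after which $\eta\downarrow 0$ concludes. The main obstacle---and the only place where the quantitative moment assumption is used essentially---is the Borel--Cantelli step: summability of $\sum_{n\in\mathcal{L}\setminus\{0\}}|n|^{-\alpha}$ requires $\alpha>d$, which is exactly why \eqref{eq:moment_condition_for_xi} is stated with exponent $d+\varepsilon$ rather than $d$.
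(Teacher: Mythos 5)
Your proof is correct and rests on the same two pillars as the paper's: the Gauss-type annulus count \eqref{eq:lattice_points_in_circle_bound} plus a Borel--Cantelli/Markov argument powered by the moment bound \eqref{eq:moment_condition_for_xi}. The only difference is in the choice of cutoff: you use the proportional threshold $|\xi_n|>\delta|n|$ together with a fixed-proportion annulus of $\Theta(R^d)$ lattice points (with a small constant), which forces the extra step of intersecting countably many almost-sure events as $\eta=1/k\downarrow 0$; the paper instead takes the subpolynomial threshold $|\xi_n|\ge|n|^{1-\delta}$ with $\delta=\varepsilon/2(d+\varepsilon)$, so that a single almost-sure event works for all $R$ and the boundary annulus contributes only $O(R^{d-\delta})$ points, yielding the quantitative bound $\#\{n:|n|\le R,\ |n+\xi_n|>R\}\le CR^{d-\delta}+X$ with $X$ an a.s.\ finite random variable. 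Your version is marginally simpler in the summability step (exponent exactly $d+\varepsilon$ rather than $d+\varepsilon/2$) but gives no rate; both are complete proofs of the stated limits.
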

	\begin{proof}
		Let $\delta:=\varepsilon/2(d+\varepsilon)>0$ with the same $\varepsilon\in(0,1)$ as in (\ref{eq:moment_condition_for_xi}). By Chebyshev's inequality
		\begin{equation*}
		\pr\left(|\xi_n|\geq |n|^{1-\delta}\right)  \leq \frac{\E\left[|\xi_n|^{d+\varepsilon}\right]}{|n|^{(d+\varepsilon)(1-\delta)}} = \frac{\E\left[|\xi_n|^{d+\varepsilon}\right]}{|n|^{d+\varepsilon/2}}
		\end{equation*}
		for any $n\in\mathcal{L}\setminus \{0\}$. Therefore
		\[
		\sum_{n\in\mathcal{L}} \pr\left(|\xi_n|\geq |n|^{1-\delta}\right) \leq 1 + \E\left[|\xi|^{d+\varepsilon}\right] \sum_{n\in \mathcal{L}\setminus\{0\} } |n|^{-d-\varepsilon/2} <\infty. 
		\]
		Hence, by the Borel-Cantelli lemma, the random variable
		\begin{equation}
		\label{eq:BC__points_dont_go_far}
		X:= \#\left\{n\in\mathcal{L} : |\xi_n|\geq |n|^{1-\delta} \right\}
		\end{equation}
		is almost surely finite. By the triangle inequality
		\begin{align}
			\label{eq:triangle_inequality_boundary_bc}
			\#&\left\{n\in \mathcal{L} : |n|\leq R,\ |n+\xi_n|>R \right\} \nonumber \\ & \leq \#\left\{n\in \mathcal{L} : |n|\leq R,\ |n+\xi_n|>R , \ |\xi_n| < |n|^{1-\delta} \right\} + X \nonumber \\ & \leq \#\left\{n\in \mathcal{L} : |n|\leq R,\ |n| + |n|^{1-\delta} > R \right\} + X.
		\end{align}
		Furthermore, \eqref{eq:lattice_points_in_circle_bound} tells us that
		\begin{equation*}
			 \#\left\{n\in \mathcal{L} : |n|\leq R,\ |n| + |n|^{1-\delta} > R \right\} \leq N_{\mathcal{L}} (B_R) - N_{\mathcal{L}} (B_{R-R^{1-\delta}}) \leq C R^{d-\delta}
		\end{equation*}
		for some $C= C(\mathcal{L})>0$. Plugging the above into \eqref{eq:triangle_inequality_boundary_bc} we see that
		\[
		\#\left\{n\in \mathcal{L} : |n|\leq R,\ |n+\xi_n|>R \right\} \leq C R^{d-\delta} + X
		\] 
		which gives the first displayed formula.
		
		For the second displayed formula, we use \eqref{eq:lattice_points_in_circle_bound} once more and see that
		\begin{equation*}
			\#\left\{n\in \mathcal{L} : R < |n|\leq R + R^{1-\delta/2} \right\} \leq C R^{d-\delta/2}
		\end{equation*}
		so the proof will follow once we show that 
		\begin{equation}
			\label{eq:BC_inequality_for_points_far_away}
			\#\left\{n\in \mathcal{L} : |n|> R + R^{1-\delta/2},\ |n + \xi_n |\leq R \right\} \leq X.
		\end{equation}
		Indeed, let $|n|> R + R^{1-\delta/2}$ be such that $|\xi_n|\leq |n|^{1-\delta}$. We have
		\begin{equation*}
			|n+\xi_n| \ge |n| - |\xi_n| \ge R + \left[|n|- R - |n|^{1-\delta}\right] \ge R + c(\delta) R^{1-\delta/2}
		\end{equation*}
		for some constant $c(\delta)>0$. This gives \eqref{eq:BC_inequality_for_points_far_away} and the proof of Lemma \ref{lemma:boundry_bound_BC} follows.
	\end{proof}
	\begin{lemma}
		\label{lemma:main_term_ww}
		Almost surely, for all $\lambda\in\mathbb{R}^d$, we have
		\begin{equation*}
		\lim_{R\to\infty} \frac{1}{m_d(B_R)} \sum_{n\in \mathcal{L}\cap B_R} \overline{e(\inner{n}{\lambda})}\left\{\overline{e(\inner{\xi_n}{\lambda})} - \varphi_{\xi}(\lambda)\right\} = 0.
		\end{equation*}
	\end{lemma}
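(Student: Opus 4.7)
The target sum factors as
\[
\frac{1}{m_d(B_R)}\sum_{n\in\mathcal{L}\cap B_R} u_\lambda(n)\,\overline{v_\lambda(n)},
\]
with $u_\lambda(n):=\overline{e(\inner{\xi_n}{\lambda})}-\varphi_\xi(\lambda)$ random and $v_\lambda(n):=e(\inner{n}{\lambda})$ deterministic. The plan is to invoke Lemma~\ref{lemma:upper_bound_for_mix_spectral_measures_ac}: the sequence $v_\lambda$ is obviously Wiener with correlation $\gamma_{v_\lambda}(k)=e(-\inner{k}{\lambda})$ and spectral measure the Dirac mass at $\lambda\bmod\mathcal{L}^{\ast}$, so the crux is to show that, on a single full-probability event $E\in\mathcal{F}$ not depending on $\lambda$, $u_\lambda$ is Wiener for \emph{every} $\lambda$, with spectral measure equal to $(1-|\varphi_\xi(\lambda)|^2)$ times normalized Lebesgue measure on $\mathcal{D}$. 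Such an absolutely continuous measure is mutually singular with any Dirac mass, so Lemma~\ref{lemma:upper_bound_for_mix_spectral_measures_ac} then delivers the desired limit. (The degenerate case $|\varphi_\xi(\lambda)|=1$ is trivial since $u_\lambda\equiv 0$ almost surely.)

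The main task is therefore to prove, for every $k\in\mathcal{L}$, almost surely and simultaneously for every $\lambda\in\mathbb{R}^d$,
\[
\Lambda_k(R,\lambda):=\frac{1}{m_d(B_R)}\sum_{n\in\mathcal{L}\cap B_R} u_\lambda(n)\,\overline{u_\lambda(n+k)}\;\longrightarrow\;(1-|\varphi_\xi(\lambda)|^2)\,\mathbf{1}_{\{k=0\}}.
\]
The independence of $\xi_n$ and $\xi_{n+k}$ for $k\neq 0$, together with the Gauss-type bound \eqref{eq:lattice_points_in_circle_bound}, makes $\E\Lambda_k(R,\lambda)$ converge to the advertised limit. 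The summands are bounded by $4$ and $|k|$-dependent in $n$, so partitioning $\mathcal{L}$ into a finite number of shifted sublattices on which they are independent and applying Hoeffding's inequality provides the pointwise concentration bound
\[
\pr\bigl(|\Lambda_k(R,\lambda)-\E\Lambda_k(R,\lambda)|>t\bigr)\le C_k\exp\bigl(-c_k R^d t^2\bigr).
\]

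To upgrade this pointwise estimate to uniform control over a compact $K\subset\mathbb{R}^d$, I would use the Lipschitz bound $|u_\lambda(n)-u_{\lambda'}(n)|\le 2\pi(|\xi_n|+\E|\xi|)|\lambda-\lambda'|$, which yields
\[
|\Lambda_k(R,\lambda)-\Lambda_k(R,\lambda')|\le \frac{C|\lambda-\lambda'|}{m_d(B_R)}\sum_{n\in\mathcal{L}\cap B_R}\bigl(|\xi_n|+|\xi_{n+k}|+1\bigr).
\]
Kolmogorov's strong law applied to the sequence $(|\xi_n|)_{n\in\mathcal{L}}$, integrable by \eqref{eq:moment_condition_for_xi}, makes the averaged sum almost surely $O(1)$ as $R\to\infty$, so $\Lambda_k(R,\cdot)|_K$ is almost surely $O(1)$-Lipschitz for all large $R$. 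An $R^{-(1+\eta)}$-net in $K$ has cardinality $O(R^{(1+\eta)d})$, and a union bound at $t=R^{-\eta}$ produces an exceedance probability of order $R^{(1+\eta)d}\exp(-c_k R^{d-2\eta})$, which is summable in $R\in\mathbb{N}$ for any sufficiently small $\eta>0$. Borel--Cantelli, followed by a routine interpolation between consecutive integer values of $R$ (controlled via the annulus count from \eqref{eq:lattice_points_in_circle_bound}), gives $\sup_{\lambda\in K}|\Lambda_k(R,\lambda)-\Lambda_k^{\infty}(\lambda)|\to 0$ almost surely. Intersecting the resulting full-probability events over the countable family $(k,j)\in\mathcal{L}\times\mathbb{N}$ with $K=B_j$ produces the single event $E$ on which $u_\lambda$ is Wiener for every $\lambda$ with the claimed spectral measure, completing the proof via Lemma~\ref{lemma:upper_bound_for_mix_spectral_measures_ac}.

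The main obstacle in executing this plan is exactly the uniformity in $\lambda$: for each individual $\lambda$ the pointwise limit follows immediately from the strong law, but the continuum of spectral parameters forces the net/concentration argument above to avoid a $\lambda$-dependent null set. This is the Wiener--Wintner feature flagged in the introduction, and the moment hypothesis \eqref{eq:moment_condition_for_xi} enters precisely to keep the Lipschitz constant of $\Lambda_k(R,\cdot)$ bounded, so that a net of polynomial cardinality suffices to control the exceedance event.
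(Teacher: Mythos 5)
Your proposal is correct, and its overall skeleton coincides with the paper's: the same decomposition into $u_\lambda$ and $v_\lambda$, the same identification of the spectral measures (absolutely continuous with density $1-|\varphi_\xi(\lambda)|^2$ versus a Dirac mass), the same appeal to Lemma \ref{lemma:upper_bound_for_mix_spectral_measures_ac}, and the same key equicontinuity input --- your Lipschitz bound on $\lambda\mapsto\Lambda_k(R,\lambda)$ with constant controlled by the strong law applied to $(|\xi_n|)$ is exactly Claim \ref{claim:derivative_bound_for_F}. Where you genuinely diverge is in how pointwise information is upgraded to a single $\lambda$-independent null set: the paper runs the Birkhoff ergodic theorem at each point of a countable dense set $\Lambda\subset\mathbb{R}^d$ and then uses the uniform-in-$R$ Lipschitz bound together with continuity of $\lambda\mapsto\gamma_{A_\lambda}(k)$ to propagate convergence to every $\lambda$, whereas you replace the ergodic theorem by a quantitative Hoeffding bound (after splitting $\mathcal{L}$ into finitely many sublattices to restore independence for $k\neq 0$), a polynomial net, a union bound, and Borel--Cantelli. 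Both are complete arguments; yours buys explicit rates and genuine uniform convergence of $\Lambda_k(R,\cdot)$ on compacts (more than is needed, since Lemma \ref{lemma:upper_bound_for_mix_spectral_measures_ac} only requires existence of the correlations for each fixed $\lambda$), at the price of leaning on boundedness and independence of the summands. The paper's softer route is what permits the extension, noted in its closing remark, to perturbations $\{\xi_n\}$ that are merely mixing rather than independent, where Hoeffding-type concentration is unavailable. One cosmetic point: your separate treatment of the degenerate case $|\varphi_\xi(\lambda)|=1$ is unnecessary --- the zero measure is mutually singular with any Dirac mass, so Lemma \ref{lemma:upper_bound_for_mix_spectral_measures_ac} covers it without a case distinction, which is just as well since the set of such $\lambda$ need not be countable.
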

	We end this section by showing how Lemma \ref{lemma:main_term_ww} implies Theorem \ref{thm:random_psf} and devote the next section to the proof of Lemma \ref{lemma:main_term_ww}.
	\begin{proof}[Proof of Theorem \ref{thm:random_psf}]
		Since
		\[
		\lim_{R\rightarrow\infty}\frac{1}{m_d(B_R)}\sum_{n\in \mathcal{L}\cap B_R} \overline{e(\inner{n}{\lambda})} = \begin{cases}
		1 & \lambda\in\mathcal{L}^\ast, \\ 0  & \lambda\not\in\mathcal{L}^\ast,
		\end{cases}
		\]
		we conclude from Lemma \ref{lemma:main_term_ww} that, almost surely, for all $\lambda\in\mathbb{R}^d$,
		\begin{equation}
		\label{eq:cases_for_random_function_limit}
		\lim_{R\rightarrow\infty}\frac{1}{m_d(B_R)}\sum_{n\in  \mathcal{L}\cap B_R} \overline{e(\inner{n+\xi_n}{\lambda})} = \begin{cases}
		\varphi_\xi(\lambda) & \lambda\in\mathcal{L}^\ast, \\ 0  & \lambda\not\in\mathcal{L}^\ast.
		\end{cases}
		\end{equation}
		By Lemma \ref{lemma:boundry_bound_BC},
		\begin{align*}
		\sup_{\lambda\in\mathbb{R}^d}&\left|M_R(\lambda) -\frac{1}{m_d(B_R)}\sum_{n\in  \mathcal{L}\cap B_R} \overline{e(\inner{n+\xi_n}{\lambda})}\right|  \\ &= \frac{1}{m_d(B_R)}\sup_{\lambda\in\mathbb{R}^d}\left|\sum_{\substack{n\in \mathcal{L} \\ |n+\xi_n|\leq R}} \overline{e(\inner{n+\xi_n}{\lambda})} - \sum_{n\in  \mathcal{L}\cap B_R} \overline{e(\inner{n+\xi_n}{\lambda})}\right|\\ &\leq \frac{\#\left\{n\in \mathcal{L} : |n|\leq R,\ |n+\xi_n|>R \right\}}{m_d(B_R)} \\ &\qquad + \frac{\#\left\{n\in \mathcal{L} : |n|> R,\ |n+\xi_n|\leq R \right\}}{m_d(B_R)} \xrightarrow{R\to\infty} 0,
		\end{align*}
		almost surely. Combining with relation (\ref{eq:cases_for_random_function_limit}), we finish the proof.
	\end{proof}
	
	\subsection{Proof of Lemma \ref{lemma:main_term_ww}}
	We now turn to the proof of Lemma \ref{lemma:main_term_ww}. Let $A_\lambda(n):= \overline{e(\inner{\xi_n}{\lambda})} - \varphi_{\xi}(\lambda)$. Notice that a simple application of Birkhoff ergodic theorem \cite[Theorem~16.2]{koralov_sinai} combined with the Fubini theorem implies that, for any fixed $\lambda\in \mathbb{R}^d$, we almost surely have that
	\begin{equation}
	\label{eq:main_term_ww}
	\lim_{R\rightarrow\infty} \frac{1}{m_d(B_R)} \sum_{n\in \mathcal{L}\cap B_R} A_\lambda(n)\overline{e(\inner{n}{\lambda})} = 0
	\end{equation}
	As we have explained in the introduction, the main point of Lemma \ref{lemma:main_term_ww} is that \textit{we may choose a single event which is independent of the $\lambda$'s}. The proof of Lemma \ref{lemma:main_term_ww} is inspired by ideas from a paper by Bellow and Losert \cite{bellow_losert}, where (among other things) an alternative proof of the Wiener-Wintner theorem is provided.
	\begin{claim}
		\label{claim:independent_random_variables_are_wiener}
		For every fixed $\lambda\in\mathbb{R}^d$, the sequence $\left\{A_\lambda(n)\right\}_{n\in\mathcal{L}}$ is, almost surely, a Wiener sequence with correlations given by
		\begin{equation}
		\label{eq:correlations_of_A_lambda}
		\gamma_{A_\lambda}(k) = \begin{cases}
		\E\left|\overline{e(\inner{\xi}{\lambda})} - \varphi_{\xi}(\lambda)\right|^2 & k=0, \\ 0 & k\in\mathcal{L}\setminus \{0\}.
		\end{cases}
		\end{equation}
	\end{claim}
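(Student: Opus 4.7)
The plan is to verify the two cases in \eqref{eq:correlations_of_A_lambda} separately, using that for each fixed $\lambda$ the random variables $\{A_\lambda(n)\}_{n\in\mathcal{L}}$ are mutually independent (inherited from the $\xi_n$), centered ($\E[A_\lambda(n)]=0$), and uniformly bounded ($|A_\lambda(n)|\leq 2$).

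\textbf{Case $k=0$.} The summands $|A_\lambda(n)|^2$ are i.i.d.\ with common expectation $\E\bigl[|A_\lambda|^2\bigr]=\E\bigl|\overline{e(\inner{\xi}{\lambda})}-\varphi_\xi(\lambda)\bigr|^2$. Enumerating $\mathcal{L}$ as $\{n_1,n_2,\ldots\}$ by nondecreasing norm, the classical strong law of large numbers gives $N^{-1}\sum_{j=1}^{N}|A_\lambda(n_j)|^2\to\E|A_\lambda|^2$ almost surely. Since $\mathcal{L}\cap B_R=\{n_1,\ldots,n_{N_\mathcal{L}(B_R)}\}$ and $N_\mathcal{L}(B_R)\to\infty$, this yields the same limit for $N_\mathcal{L}(B_R)^{-1}\sum_{n\in\mathcal{L}\cap B_R}|A_\lambda(n)|^2$. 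Multiplying by the ratio $N_\mathcal{L}(B_R)/m_d(B_R)\to 1$ from \eqref{eq:lattice_points_in_circle_bound} gives $\gamma_{A_\lambda}(0)=\E|A_\lambda|^2$ almost surely.

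\textbf{Case $k\in\mathcal{L}\setminus\{0\}$.} Set $X_n:=A_\lambda(n)\overline{A_\lambda(n+k)}$. Then $|X_n|\leq 4$, $\E[X_n]=\E[A_\lambda(n)]\cdot\overline{\E[A_\lambda(n+k)]}=0$ by independence and centering, and $X_n,X_{n'}$ are independent whenever $\{n,n+k\}\cap\{n',n'+k\}=\emptyset$. Expanding the second moment,
\[
\E\Bigl|\sum_{n\in\mathcal{L}\cap B_R}X_n\Bigr|^2=\sum_{n,n'}\E[X_n\overline{X_{n'}}],
\]
and only the pairs with $n-n'\in\{-k,0,k\}$ contribute a nonzero covariance, giving a total bounded by $48\,N_\mathcal{L}(B_R)=O(R^d)$. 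Writing $S_R(k):=(m_d(B_R))^{-1}\sum_{n\in\mathcal{L}\cap B_R}X_n$, Chebyshev gives $\pr(|S_R(k)|>\varepsilon)=O(R^{-d})$. Choosing a polynomial subsequence $R_j=j^\alpha$ with $\alpha>1/d$ makes these probabilities summable, so Borel--Cantelli yields $S_{R_j}(k)\to 0$ almost surely. For general $R\in[R_j,R_{j+1})$, the uniform bound $|X_n|\leq 4$ together with the shell estimate $N_\mathcal{L}(B_{R_{j+1}}\setminus B_{R_j})=O(R_{j+1}^d-R_j^d)=O(j^{\alpha d-1})$ gives
\[
|S_R(k)|\leq|S_{R_j}(k)|+\frac{4\,N_\mathcal{L}(B_{R_{j+1}}\setminus B_{R_j})}{m_d(B_{R_j})}=|S_{R_j}(k)|+O(j^{-1}),
\]
so $S_R(k)\to 0$ along the full sequence.

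The main (but minor) obstacle is the choice of subsequence: it must be sparse enough for the Chebyshev--Borel--Cantelli bound to be summable, yet dense enough for the shell-to-ball volume ratio to vanish, and any polynomial $R_j=j^\alpha$ with $\alpha>1/d$ balances both. Alternatively, one could avoid the moment computation entirely by invoking Wiener's multidimensional ergodic theorem for the stationary ergodic i.i.d.\ process $\{\xi_n\}_{n\in\mathcal{L}}$ averaged over the F\o lner sequence $\{B_R\}$, applied to the bounded observable $\omega\mapsto A_\lambda(0)\overline{A_\lambda(k)}$.
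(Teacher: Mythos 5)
Your argument is correct, but it takes a genuinely different route from the paper. The paper proves the claim in one stroke: for each fixed $k$, the sequence $\bigl\{A_\lambda(n)\overline{A_\lambda(n+k)}\bigr\}_{n\in\mathcal{L}}$ is stationary and ergodic under lattice shifts, so the Birkhoff ergodic theorem (in the form of \cite[Theorem~16.2]{koralov_sinai}, for averages over balls) gives $F_{R,k}(\lambda)\to\E\bigl[A_\lambda(0)\overline{A_\lambda(k)}\bigr]$ almost surely, and the right-hand side is evaluated by independence and centering; your closing ``alternative'' is in fact precisely the paper's proof. Your main argument instead replaces the ergodic theorem by elementary tools: for $k=0$ the classical one-dimensional SLLN after enumerating $\mathcal{L}$ by nondecreasing norm (correctly combined with $N_\mathcal{L}(B_R)/m_d(B_R)\to1$ from \eqref{eq:lattice_points_in_circle_bound}), and for $k\neq0$ a second-moment bound exploiting the finite-range dependence of $X_n=A_\lambda(n)\overline{A_\lambda(n+k)}$ (at most three values of $n'$ per $n$ give a nonvanishing covariance), followed by Chebyshev, Borel--Cantelli along $R_j=j^\alpha$ with $\alpha d>1$, and interpolation between consecutive radii using $|X_n|\leq4$ and the shell count. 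All steps check out, including the subsequence/interpolation balance you highlight. What each approach buys: yours is self-contained and avoids invoking a multiparameter ergodic theorem over balls, at the cost of a longer computation; the paper's is shorter and, as its final Remark points out, extends verbatim to perturbations $\{\xi_n\}$ that are merely ergodic or mixing under lattice shifts, whereas your covariance bound is tied to independence (or at least to summable correlations).
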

	\begin{proof}
		Consider the random function $F_{R,k}:\mathbb{R}^d\to \mathbb{C}$ given by
		\begin{equation}
		\label{eq:definition_of_G_R_k}
		F_{R,k}(\lambda):= \frac{1}{m_d(B_R)} \sum_{n\in\mathcal{L}\cap B_R} A_{\lambda}(n)\overline{A_{\lambda}(n+k)}.
		\end{equation}
		We fix some $\lambda\in \mathbb{R}^d$ and turn to show that almost surely, $F_{R,k}(\lambda)\rightarrow \gamma_{A_\lambda}(k)$ as $R\to\infty$. For every fixed $k\in\mathcal{L}$ the sequence $$\left\{A_{\lambda}(n)\overline{A_{\lambda}(n+k)}\right\}_{n\in\mathcal{L}}$$ is ergodic with respect to the lattice shifts (in the sense defined in \cite[Chapter~16.3]{koralov_sinai}). By the Birkhoff ergodic theorem we see that
		\[
		\lim_{R\to\infty} F_{R,k}(\lambda) = \E\left[A_{\lambda}(0)\overline{A_{\lambda}(k)}\right]
		\]
		almost surely. The claim follows by observing that $A_{\lambda}(0)$ and $A_{\lambda}(k)$ are independent for all $k\not=0$.
	\end{proof}
	\begin{claim}
		\label{claim:derivative_bound_for_F}
		For every $k\in \mathcal{L}$ we have that, almost surely,
		\[
		\sup_{R\geq1} \sup_{\lambda\in\mathbb{R}^d} \left|\nabla F_{R,k}(\lambda)\right| <\infty.
		\]
	\end{claim}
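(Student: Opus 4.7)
The plan is to bound $|\nabla F_{R,k}(\lambda)|$ uniformly in $\lambda$ by an expression whose $R$-growth is controlled via Birkhoff's ergodic theorem applied to the i.i.d.\ sequence $\{|\xi_n|\}_{n\in\mathcal{L}}$. The first step is a direct computation: since $A_\lambda(n)=e^{-2\pi i \langle \xi_n,\lambda\rangle}-\varphi_\xi(\lambda)$, differentiating under the expectation (which is legal because the moment condition implies $\E|\xi|<\infty$) gives
\[
\nabla_\lambda A_\lambda(n)=-2\pi i\,\xi_n\,e^{-2\pi i\langle \xi_n,\lambda\rangle}+2\pi i\,\E\!\left[\xi\,e^{-2\pi i\langle \xi,\lambda\rangle}\right],
\]
so that the pointwise bound $|\nabla_\lambda A_\lambda(n)|\le 2\pi(|\xi_n|+\E|\xi|)$ holds uniformly in $\lambda\in\mathbb{R}^d$. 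Together with the trivial estimate $|A_\lambda(n)|\le 2$, the product rule applied to the definition \eqref{eq:definition_of_G_R_k} yields
\[
\sup_{\lambda\in\mathbb{R}^d}|\nabla F_{R,k}(\lambda)|\le \frac{4\pi}{m_d(B_R)}\sum_{n\in\mathcal{L}\cap B_R}\Bigl(|\xi_n|+|\xi_{n+k}|+2\E|\xi|\Bigr).
\]

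The second step is to show that the right-hand side is almost surely bounded in $R\geq 1$. The term $8\pi\E|\xi|\cdot N_\mathcal{L}(R)/m_d(B_R)$ is controlled by the Gauss-type bound \eqref{eq:lattice_points_in_circle_bound}, which gives $N_\mathcal{L}(R)/m_d(B_R)\to 1$. For the remaining two sums, I would apply the Birkhoff ergodic theorem to the i.i.d.\ (hence ergodic with respect to lattice shifts) sequence $\{|\xi_n|\}$ to deduce
\[
\lim_{R\to\infty}\frac{1}{m_d(B_R)}\sum_{n\in\mathcal{L}\cap B_R}|\xi_n|=\E|\xi|\qquad\text{almost surely},
\]
and similarly, after translating the index by $k$, for $\sum_{n\in\mathcal{L}\cap B_R}|\xi_{n+k}|$. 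Since any almost-surely convergent sequence is almost surely bounded, and since for each fixed $R\ge 1$ the sum is a finite a.s.-finite quantity, we conclude that $\sup_{R\ge1}\sup_\lambda|\nabla F_{R,k}(\lambda)|<\infty$ on an event of full probability.

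I do not anticipate a genuine obstacle here: the supremum over $\lambda$ costs nothing because the bound on $|\nabla A_\lambda(n)|$ is already uniform in $\lambda$ (the bad factor $\xi_n$ that appears when differentiating the oscillatory exponential is absorbed by the absolute value), and the supremum over $R$ is handled by the ergodic theorem once $\E|\xi|<\infty$ is verified. The latter follows from the hypothesis \eqref{eq:moment_condition_for_xi} via Jensen's inequality, since $d+\varepsilon\ge 1$.
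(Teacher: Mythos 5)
Your proposal is correct and follows essentially the same route as the paper: a uniform-in-$\lambda$ pointwise bound $|\nabla A_\lambda(n)|\lesssim |\xi_n|+\E|\xi|$, the product rule together with $|A_\lambda(n)|\le 2$, and then an almost-sure law of large numbers for $\{|\xi_n|\}$ (the paper invokes the Strong Law of Large Numbers where you invoke Birkhoff, which for i.i.d.\ summands is the same statement) to bound the resulting averages uniformly in $R$. The only cosmetic difference is that the paper bounds $|\nabla A_\lambda(n)|^2$ componentwise via Cauchy--Schwarz while you use the triangle inequality directly; both yield the same estimate.
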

	\begin{proof}
		Write $x=\left(x^1,\ldots,x^d\right)\in\mathbb{R}^d$ for a $d$-dimensional vector. Observe that
		\begin{align*}
		\nabla F_{R,k}(\lambda) &= \frac{1}{m_d(B_R)} \sum_{n\in\mathcal{L}\cap B_R} \nabla \left(A_{\lambda}(n)\overline{A_{\lambda}(n+k)}\right) \\ &= \frac{1}{m_d(B_R)} \sum_{n\in\mathcal{L}\cap B_R}  \overline{A_{\lambda}(n+k)} \nabla A_{\lambda}(n) + \frac{1}{m_d(B_R)} \sum_{n\in\mathcal{L}\cap B_R} A_{\lambda}(n) \nabla \overline{A_{\lambda}(n+k)}, 
		\end{align*}
		with
		\[
		\nabla A_{\lambda}(n) = -2\pi i \begin{pmatrix}
		\xi_{n}^{1}\overline{e(\inner{\xi_n}{\lambda})} - \E\left[\xi_{n}^{1}\overline{e(\inner{\xi_n}{\lambda})}\right] \\ \vdots \\ \xi_{n}^{d}\overline{e(\inner{\xi_n}{\lambda})} - \E\left[\xi_{n}^{d}\overline{e(\inner{\xi_n}{\lambda})}\right]
		\end{pmatrix}.
		\]
		Applying the triangle and Cauchy-Schwarz inequalities yields
		\begin{equation*}
		\left|\nabla A_{\lambda}(n)\right|^2 \lesssim \sum_{j=1}^{d} \left|\xi_n^j\overline{e(\inner{\xi_n}{\lambda})}\right|^2 + \sum_{j=1}^{d} \left|\E\left[\xi_{n}^{j}\overline{e(\inner{\xi_n}{\lambda})}\right]\right|^2 \lesssim \left|\xi_n\right|^2 + \E\left[\left|\xi_n\right|\right]^2,
		\end{equation*}
		and so, since $\left|A_n(\lambda)\right| \leq 2$, we obtain that
		\begin{align*}
		\sup_{\lambda\in\mathbb{R}^d} \left|\nabla F_{R,k}(\lambda)\right| \lesssim \frac{1}{m_d(B_R)} \sum_{n\in \mathcal{L}\cap B_R} \left(\left|\xi_n\right| + \left|\xi_{n+k}\right| + \E\left[|\xi|\right]\right).
		\end{align*}
		By the moment assumption (\ref{eq:moment_condition_for_xi}), we may apply the Strong Law of Large Numbers \cite[Theorem~7.2]{koralov_sinai} which yields that, almost surely,
		\[
		\lim_{R\to \infty} \frac{1}{m_d(B_R)} \sum_{n\in \mathcal{L}\cap B_R} \left(\left|\xi_n\right| + \left|\xi_{n+k}\right| + \E\left[|\xi|\right]\right) = 3\E\left[|\xi|\right].
		\]
		As every convergent sequence is bounded, we conclude that $\sup_{\lambda\in\mathbb{R}^d} \left|\nabla F_{R,k}(\lambda)\right|$ is bounded uniformly in $R\ge 1$ and hence the claim follows.
	\end{proof}
	\begin{proof}[Proof of Lemma \ref{lemma:main_term_ww}]
		Let $\Lambda$ be a countable dense subset of $\mathbb{R}^d$, and fix some $k\in\mathcal{L}$. Since a countable union of probability zero events is a probability zero event, we can conclude from Claim \ref{claim:independent_random_variables_are_wiener} that
		\begin{equation}
		\label{eq:limit_for_F_R_k}
		\lim_{R\to\infty} F_{R,k}(\lambda) = \gamma_{A_\lambda}(k)
		\end{equation}
		almost surely for all $\lambda\in \Lambda$. We now show that relation (\ref{eq:limit_for_F_R_k}) holds almost surely for all $\lambda\in\mathbb{R}^d$. 
		
		Indeed, for $\lambda\not\in \Lambda$, take a sequence $(\lambda_p)\subset \Lambda$ converging to $\lambda$ as $p\to\infty$, and denote for the moment $\gamma(\lambda):=\gamma_{A_\lambda}(k)$, where $\gamma_{A_\lambda}(k)$ is the same as in (\ref{eq:correlations_of_A_lambda}). By Claim \ref{claim:derivative_bound_for_F}, we almost surely have
		\begin{align*}
		\left|F_{R,k}(\lambda) -\gamma(\lambda) \right| & \leq \left|\gamma(\lambda_p)- \gamma(\lambda)\right| + \left|F_{R,k}(\lambda_p) - \gamma(\lambda_p)\right| + \left|F_{R,k}(\lambda) - F_{R,k}(\lambda_p)\right| \\ & \leq \left|\gamma(\lambda_p)- \gamma(\lambda)\right| + \left|F_{R,k}(\lambda_p) - \gamma(\lambda_p)\right| + M_k\left|\lambda - \lambda_p\right|,
		\end{align*}
		where,
		\[
		M_k:= \sup_{R\geq1} \sup_{\lambda\in\mathbb{R}^d} \left|\nabla F_{R,k}(\lambda)\right| < \infty.
		\]
		Since the function $\lambda \mapsto \gamma(\lambda)$ is continuous, the limit $R\to \infty$ followed by $p\to\infty$ yields that relation (\ref{eq:limit_for_F_R_k}) holds almost surely for all $\lambda\in \mathbb{R}^d$.
		
		Since the number of lattice points is countable, we conclude that, almost surely, $\left\{A_\lambda(n)\right\}_{n\in\mathcal{L}}$ is a Wiener sequence for all $\lambda\in\mathbb{R}^d$. The correlation sequence of $A_\lambda$ is the sequence $\gamma_{A_\lambda}$ defined in (\ref{eq:correlations_of_A_lambda}). The corresponding spectral measure is given by
		\begin{equation*}
		{\rm d}\mu_{A_\lambda}(x) = \E\left|\overline{e(\inner{\xi}{\lambda})} - \varphi_{\xi}(\lambda)\right|^2 {\rm d}m_d(x), 
		\end{equation*} 
		and is a constant multiple of Lebesgue measure on the fundamental domain $\mathcal{D}$. The sequence $\left\{e\left(\inner{\lambda}{n}\right)\right\}_{n\in\mathcal{L}}$ is also a Wiener sequence, with the correlation measure $\delta_{\lambda (\text{mod } \mathcal{L})}$, a point mass at the unique point in $\mathcal{D}$ given by $\lambda - n$ for some lattice point $n\in\mathcal{L}$. Clearly, $\delta_{\lambda (\text{mod } \mathcal{L})}$ is singular with respect to Lebesgue measure. Hence, we apply Lemma \ref{lemma:upper_bound_for_mix_spectral_measures_ac} and conclude that, almost surely, for all $\lambda\in\mathbb{R}^d$
		\[
		\lim_{R\rightarrow\infty} \frac{1}{m_d(B_R)} \sum_{n\in \mathcal{L} \cap B_R} A_n(\lambda)\overline{e\left(\inner{\lambda}{n}\right)} = 0
		\]
		which gives the desired result.
	\end{proof}	
	
	\begin{remark}
		Notice that we did not use the independence of $\xi_n$'s in a crucial way. The limit 
		\[
		\lim_{R\to\infty} F_{R,k}(\lambda) = \E\left[A_{\lambda}(0)\overline{A_{\lambda}(k)}\right]
		\]
		holds in a much more general setting and gives rise to spectral measures which are not necessarily a constant multiple of Lebesgue measure. Indeed, Theorem \ref{thm:random_psf} remains true if we assume that $\{\xi_n\}$ are only \emph{mixing} (in the sense of ergodic theory) with respect to the lattice shifts. For a precise definition of this notion see \cite[Section~16.3]{koralov_sinai}.
	\end{remark}
		
	\pagebreak
	
	\clearpage
	
	\appendix
	
	\section{Proof of Lemma \ref{lemma:upper_bound_for_mix_spectral_measures_ac}}
	\label{appendix:proof_of_lemma_upper_bound}
	
	Let $\mu$ and $\sigma$ be a finite Borel measures on $\mathcal{D}$, the fundamental domain to the lattice $\mathcal{L}$. We write $\mu \ll \sigma$ if $\mu$ is absolutely continuous with respect to $\sigma$.
	
	\begin{definition}
		\label{def:affiliation_of_measures}
		Let $\mu$ and $\nu$ be finite Borel measures on $\mathcal{D}$, and suppose $\sigma$ is another finite Borel measure such that $\mu \ll \sigma$ and $\nu \ll \sigma$. The \emph{affinity} between the measures $\mu$ and $\nu$ (sometimes called the \emph{Hellinger integral}) is defined as follows
		\begin{equation}
		\label{eq:affinity_of_measures}
		\rho\left(\mu,\nu\right) := \int_{\mathcal{D}} \left(\frac{{\rm d}\mu}{{\rm d}\sigma}\right)^{1/2} \left(\frac{{\rm d}\nu}{{\rm d}\sigma}\right)^{1/2} {\rm d} \sigma.
		\end{equation}
	\end{definition}
	We observe two properties which are immediate from (\ref{eq:affinity_of_measures}):
	\begin{itemize}
		\item $\rho(\mu,\nu)$ does not depend on the reference measure $\sigma$;
		\item $\rho(\mu,\nu) = 0$ if and only if $\mu$ and $\nu$ are mutually singular.
	\end{itemize}
	Recall that a family of positive measures $\left(\sigma_t\right)_{t>0}$ on $\mathcal{D}$ converges weak-star to a limiting measure $\sigma$ if for any bounded continuous function $f:\mathcal{D}\to \mathbb{R}$ 
	\[
	\int_{\mathcal{D}} f {\rm d} \sigma_t \longrightarrow \int_{\mathcal{D}} f{\rm d} \sigma
	\]
	as $t\to\infty$.  We will denote this convergence by $\sigma_t\xrightarrow{\ w^\ast\ } \sigma$.
	\begin{theorema}[{\cite[Theorem~2]{coquet_kamae_menfrance}}]
		\label{thm:weak_star_measures_bound_affiliation}
		Let $\left(\mu_t\right)$ and $\left(\nu_t\right)$ be two families of positive measures on $\mathcal{D}$ such that $\mu_t\xrightarrow{\ w^\ast\ }\mu$ and $\nu_t\xrightarrow{\ w^\ast\ }\nu$ as $t\to\infty$ for some finite measures $\mu$ and $\nu$. Then
		\begin{equation*}
		\label{eq:weak_star_measures_bound_affiliation}
		\limsup_{t\to\infty} \rho\left(\mu_t,\nu_t\right) \leq \rho\left(\mu,\nu\right).
		\end{equation*}
	\end{theorema}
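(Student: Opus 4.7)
The plan is to reduce the claim to a statement about finitely many real numbers via a variational (partition) characterization of the affinity, and then apply the Portmanteau part of weak-star convergence.

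First, I would establish the identity
\begin{equation*}
\rho(\mu,\nu) \;=\; \inf_{\mathcal{P}} \sum_{A\in\mathcal{P}} \sqrt{\mu(A)\,\nu(A)},
\end{equation*}
the infimum being over finite Borel partitions $\mathcal{P}$ of $\mathcal{D}$. The inequality $\le$ is Cauchy--Schwarz applied on each piece: taking $\sigma = \mu+\nu$ and $f=d\mu/d\sigma$, $g=d\nu/d\sigma$, one has
\begin{equation*}
\sqrt{\mu(A)\nu(A)} = \Bigl(\int_A f\,d\sigma\Bigr)^{1/2}\Bigl(\int_A g\,d\sigma\Bigr)^{1/2} \;\ge\; \int_A \sqrt{fg}\,d\sigma,
\end{equation*}
and summing gives $\rho(\mu,\nu)$ on the right. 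For the matching lower bound I would refine $\mathcal{P}$ so that $f$ and $g$ are nearly constant on each piece (e.g.\ using level sets of $f/g$), so that Cauchy--Schwarz is asymptotically saturated.

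Second, I would upgrade this to an infimum over the subclass $\mathcal{P}^\ast$ of partitions whose cells $A_i$ satisfy $\mu(\partial A_i) = \nu(\partial A_i) = 0$. This can be done by starting with a tiling of $\mathcal{D}$ by small boxes and translating it by a generic offset $s$ in a one-parameter family; since $\mu$ and $\nu$ are finite, at most countably many offsets can charge a hyperplane, so a suitable $s$ yields a partition in $\mathcal{P}^\ast$ that is an arbitrarily fine refinement of a given one. Hence
\begin{equation*}
\rho(\mu,\nu) = \inf_{\mathcal{P}\in\mathcal{P}^\ast} \sum_{A\in\mathcal{P}} \sqrt{\mu(A)\nu(A)}.
\end{equation*}

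Third, I would apply the Portmanteau theorem. For any $\mathcal{P}=\{A_1,\dots,A_n\}\in\mathcal{P}^\ast$, the weak-star convergences $\mu_t \xrightarrow{w^\ast}\mu$ and $\nu_t\xrightarrow{w^\ast}\nu$ give $\mu_t(A_i)\to\mu(A_i)$ and $\nu_t(A_i)\to\nu(A_i)$, because each $\partial A_i$ has measure zero under the limiting measures. Therefore, using the $\le$ direction of the variational identity for each $t$,
\begin{equation*}
\limsup_{t\to\infty} \rho(\mu_t,\nu_t) \;\le\; \limsup_{t\to\infty}\sum_i \sqrt{\mu_t(A_i)\nu_t(A_i)} \;=\; \sum_i \sqrt{\mu(A_i)\nu(A_i)}.
\end{equation*}
Taking the infimum over $\mathcal{P}\in\mathcal{P}^\ast$ on the right yields the desired inequality.

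The main obstacle I anticipate is the second step, namely producing enough partitions with $(\mu+\nu)$-null boundaries so that the infimum is unchanged; once this technicality is handled, the rest is just observing that $\rho(\mu,\nu)$ is the infimum of a family of functionals each of which is continuous at the pair $(\mu,\nu)$ in the weak-star topology, and an infimum of continuous functionals is upper semicontinuous.
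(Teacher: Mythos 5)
Your overall strategy is sound and is a genuine variant of the paper's argument: you express $\rho(\mu,\nu)$ as an infimum of functionals $\mathcal{P}\mapsto\sum_{A\in\mathcal{P}}\sqrt{\mu(A)\nu(A)}$, each of which is continuous at $(\mu,\nu)$ for the weak-star topology, and conclude upper semicontinuity. The paper does essentially this but sidesteps the continuity-set issue by replacing the indicators of the near-optimal cells with a continuous partition of unity subordinated to slightly larger open sets (outer regularity), so that weak-star convergence applies directly to $\int f_j\,{\rm d}\mu_t$; you instead keep indicator functions and invoke Portmanteau, which forces you into $(\mu+\nu)$-continuity sets. Your Steps 1 and 3 are fine: the $\le$ half of the variational identity is Cauchy--Schwarz cell by cell, the level sets of $f/g$ nearly saturate it (this is exactly the paper's partition into $A$, $B$ and the $V_j$), and on the compact set $\mathcal{D}$ the total masses converge, so Portmanteau does give $\mu_t(A_i)\to\mu(A_i)$ for continuity sets.

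The gap is in Step 2, and it sits exactly where the real work of the theorem lies. A translated box tiling is \emph{not} a refinement of the near-optimal level-set partition --- the cells $V_j$ are arbitrary Borel sets --- so a generic translate does not produce ``a partition in $\mathcal{P}^\ast$ that is an arbitrarily fine refinement of a given one.'' If you instead pass to the common refinement of the boxes with the level-set partition, you do get a refinement (and the functional only decreases under refinement, again by Cauchy--Schwarz), but its cells inherit the boundaries of the $V_j$, which need not be $(\mu+\nu)$-null, so you leave $\mathcal{P}^\ast$ again. As written you have therefore not shown $\inf_{\mathcal{P}\in\mathcal{P}^\ast}\sum_A\sqrt{\mu(A)\nu(A)}\le\rho(\mu,\nu)$. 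Two standard ways to close this: (i) approximate each cell $V_j$ by a continuity set directly via regularity --- choose compact $K_j\subset V_j\subset O_j$ open with $(\mu+\nu)(O_j\setminus K_j)$ small, take $A_j=\{x: d(x,K_j)<r\}$ for one of the all-but-countably-many $r$ with $(\mu+\nu)(\partial A_j)=0$, then disjointify; this is morally the paper's outer-regularity step. Or (ii) prove that along a nested sequence of box partitions one has $\sum_Q\sqrt{\mu(Q)\nu(Q)}=\int\sqrt{\E[f\mid\mathcal{F}_\delta]\,\E[g\mid\mathcal{F}_\delta]}\,{\rm d}(\mu+\nu)\to\rho(\mu,\nu)$ by martingale convergence, which vindicates the box idea but by a different mechanism than refinement. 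Either fix is routine, but the step does not go through as stated.
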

	As mentioned before, Theorem \ref{thm:weak_star_measures_bound_affiliation} originally appeared in \cite{coquet_kamae_menfrance} for the case $d=1$ and $\mathcal{L}= \mathbb{Z}$. For the convenience of the reader we provide a proof in appendix \ref{appendix:proof_of_theorem_weak}. In fact, Lemma \ref{lemma:upper_bound_for_mix_spectral_measures_ac} is a special case of the following inequality. 
	\begin{lemma}
		\label{lemma:upper_bound_for_mix}
		Let $u,v$ be Wiener sequences and let $\mu_{u},\mu_{v}$ be their spectral measures. Then
		\[
		\limsup_{R\to\infty} \left|\frac{1}{m_d(B_R)}\sum_{n\in\mathcal{L}\cap B_R} u(n) \overline{v(n)} \right| \le \rho\left(\mu_{u},\mu_{v}\right).
		\]
		In particular, if $\mu_u$ and $\mu_v$ are mutually singular, then
		\[
		\lim_{R\to\infty}\frac{1}{m_d(B_R)}\sum_{n\in\mathcal{L}\cap B_R}u(n) \overline{v(n)} = 0.
		\]
	\end{lemma}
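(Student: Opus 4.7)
The plan is to approximate the spectral measures $\mu_{u}$ and $\mu_{v}$ by explicit empirical ``periodogram'' measures on $\mathcal{D}$ and then invoke Theorem~\ref{thm:weak_star_measures_bound_affiliation}. For $w\in\{u,v\}$ and $R\ge 1$, set
\[
\mu_{w,R}(dx) := \frac{1}{m_d(B_R)} \left|\sum_{n\in\mathcal{L}\cap B_R} w(n)\,e(-\inner{n}{x})\right|^{2}dx
\]
on $\mathcal{D}$, with $dx$ normalized so that $\{e(\inner{k}{\cdot})\}_{k\in\mathcal{L}}$ is an orthonormal family.

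The first step is to dominate the quantity of interest by the affinity $\rho(\mu_{u,R},\mu_{v,R})$. Setting $f_{w,R}(x):=\sum_{n\in\mathcal{L}\cap B_R} w(n)\,e(-\inner{n}{x})$ and expanding the product, the orthogonality of characters gives
\[
\frac{1}{m_d(B_R)}\sum_{n\in\mathcal{L}\cap B_R} u(n)\overline{v(n)} \;=\; \int_{\mathcal{D}} \frac{f_{u,R}(x)}{\sqrt{m_d(B_R)}}\,\overline{\frac{f_{v,R}(x)}{\sqrt{m_d(B_R)}}}\,dx.
\]
Since $|f_{w,R}|/\sqrt{m_d(B_R)}$ is exactly the square root of the density of $\mu_{w,R}$, pulling the modulus inside yields
\[
\left|\frac{1}{m_d(B_R)}\sum_{n\in\mathcal{L}\cap B_R} u(n)\overline{v(n)}\right| \;\le\; \rho(\mu_{u,R},\mu_{v,R}).
\]

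The second step is to verify $\mu_{w,R}\xrightarrow{\ w^\ast\ }\mu_{w}$. A direct expansion gives
\[
\widehat{\mu_{w,R}}(k) \;=\; \frac{1}{m_d(B_R)}\sum_{\substack{n\in\mathcal{L}\cap B_R \\ n+k\in\mathcal{L}\cap B_R}} w(n)\overline{w(n+k)},
\]
which differs from the genuine Ces\`aro average $\frac{1}{m_d(B_R)}\sum_{n\in\mathcal{L}\cap B_R} w(n)\overline{w(n+k)}\to \gamma_w(k)=\widehat{\mu_w}(k)$ only by a sum over the shell $\{n\in\mathcal{L}\cap B_R:n+k\notin B_R\}$. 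To see this correction is $o(1)$, I would apply the Wiener property at $k=0$, which says $\frac{1}{m_d(B_R)}\sum_{n\in\mathcal{L}\cap B_R}|w(n)|^{2}\to \gamma_w(0)$; taking the difference of this average at radii $R$ and $R-|k|$ and combining with the Gauss-type bound \eqref{eq:lattice_points_in_circle_bound} shows that the total $L^{2}$-mass of $w$ on the shell is $o(R^{d})$, so Cauchy--Schwarz controls the defect. Hence $\widehat{\mu_{w,R}}(k)\to \widehat{\mu_{w}}(k)$ for every $k\in\mathcal{L}$, and since the total masses $\mu_{w,R}(\mathcal{D})=\widehat{\mu_{w,R}}(0)$ are bounded, a Stone--Weierstrass argument (trigonometric polynomials are uniformly dense in $C(\mathcal{D})$) upgrades this to weak-star convergence.

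Combining Steps~1 and 2 with Theorem~\ref{thm:weak_star_measures_bound_affiliation} then gives
\[
\limsup_{R\to\infty}\left|\frac{1}{m_d(B_R)}\sum_{n\in\mathcal{L}\cap B_R} u(n)\overline{v(n)}\right| \;\le\; \limsup_{R\to\infty} \rho(\mu_{u,R},\mu_{v,R}) \;\le\; \rho(\mu_{u},\mu_{v}),
\]
which is the first assertion; the second assertion follows at once from the observation that $\rho(\mu_{u},\mu_{v})=0$ precisely when $\mu_{u}\perp \mu_{v}$. I expect the main obstacle to be exactly Step~2: controlling the boundary defect in the Fourier coefficient computation without assuming pointwise boundedness of $w$. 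The tool needed is already present in the hypothesis (the Wiener property for the lag $k=0$ together with the Gauss lattice-point count), so this should go through, but it is the only step requiring genuine care.
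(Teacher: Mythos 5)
Your proposal is correct and follows essentially the same route as the paper: the same periodogram measures $\frac{1}{m_d(B_R)}\left|\sum_{n\in\mathcal{L}\cap B_R}w(n)e(\inner{n}{x})\right|^2{\rm d}m_d(x)$, the same Fourier-coefficient verification of weak-star convergence, the same Cauchy--Schwarz/triangle-inequality bound of the correlation sum by $\rho(\mu_{u,R},\mu_{v,R})$, and the same appeal to Theorem~\ref{thm:weak_star_measures_bound_affiliation}. Your Step~2, controlling the shell defect $\{n\in\mathcal{L}\cap B_R : n+k\notin B_R\}$ via the $k=0$ Wiener property and Cauchy--Schwarz, is a detail the paper's computation of $\widehat{\mu_u^R}(k)$ silently elides, and it is handled correctly.
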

	\begin{proof}
		Consider the family of measures
		\begin{equation*}
		{\rm d}\mu_{u}^{R}(x):= \frac{1}{m_d\left(B_{R}\right)}\left|\sum_{n\in\mathcal{L}\cap B_R}u(n)e\left(\inner{n}{x}\right) \right|^2 {\rm d}m_d(x), \qquad R\ge 1.
		\end{equation*}
		We will first show that $\mu_{u}^R \xrightarrow{\ w^\ast\ } \mu_u$ as $R\to\infty$. We do so by examining the Fourier coefficients. For any $k\in\mathcal{L}$ 
		\begin{align*}
		\widehat{\mu_{u}^{R}}(k) &= \frac{1}{m_d\left(B_{R}\right)} \int_{\mathcal{D}}\left|\sum_{n\in\mathcal{L}\cap B_R}u(n)e\left(\inner{n}{x}\right) \right|^2 \overline{e\left(\inner{k}{x}\right)} {\rm d}m_d(x) \\ &= \frac{1}{m_d\left(B_{R}\right)} \int_{\mathcal{D}}\left(\sum_{n\in\mathcal{L}\cap B_R}u(n)e\left(\inner{n}{x}\right)\right)\left(\sum_{n^\prime\in\mathcal{L}\cap B_R}\overline{u(n^\prime)}\overline{e\left(\inner{n^\prime+k}{x}\right)}\right) {\rm d}m_d(x) \\ &= \frac{1}{m_d\left(B_{R}\right)} \sum_{n,n^{\prime}\in\mathcal{L}\cap B_R} u(n)\overline{u(n^{\prime})} \left(\int_{\mathcal{D}} e\left(\inner{x}{n-n^\prime-k}\right){\rm d}m_d(x) \right) \\ &= \frac{1}{m_d\left(B_{R}\right)} \sum_{n\in\mathcal{L}\cap B_R} u(n)\overline{u(n+k)}.
		\end{align*}
		Since $u$ is a Wiener sequence relation (\ref{eq:correlation_of_seq_u}) yields that
		\[
		\lim_{R\to\infty} \widehat{\mu_{u}^{R}}(k) = \lim_{R\to\infty} \frac{1}{m_d\left(B_{R}\right)} \sum_{n\in\mathcal{L}\cap B_R} u(n)\overline{u(n+k)} = \widehat{\mu_{u}}(k), \qquad \text{for all} \ k\in\mathcal{L}.
		\]
		Pointwise convergence of the Fourier coefficients implies that $\mu_u^{R}\xrightarrow{\ w^\ast\ } \mu_u$ as $R\to \infty$. Symmetrically we define 
		\[
		{\rm d}\mu_{v}^{R}(x):= \frac{1}{m_d\left(B_{R}\right)}\left|\sum_{n\in\mathcal{L}\cap B_R}v_{n}e\left(\inner{n}{x}\right) \right|^2  {\rm d}m_d(x)
		\]
		and obtain $\mu_{v}^R\xrightarrow{\ w^\ast\ } \mu_v$ as $R\to\infty$. Now, Theorem \ref{thm:weak_star_measures_bound_affiliation} implies that
		\[
		\limsup_{R\to\infty} \rho\left(\mu_u^R,\mu_v^R\right) \leq \rho\left(\mu_u,\mu_v\right).
		\] 
		By using ${\rm d} \sigma = {\rm d} m_d$ in the Hellinger integral (\ref{eq:affinity_of_measures}), it remains to apply the triangle inequality and observe that
		\begin{align*}
		\rho\left(\mu_u^R,\mu_v^R\right) &= \frac{1}{m_d\left(B_{R}\right)} \int_{\mathcal{D}} \left|\sum_{n\in\mathcal{L}\cap B_R}u(n)e\left(\inner{n}{x}\right) \right|\left|\sum_{n^{\prime}\in\mathcal{L}\cap B_R}v(n^\prime)e\left(\inner{n^\prime}{x}\right) \right| {\rm d}m_d(x) \\ &\geq \frac{1}{m_d\left(B_{R}\right)} \left|\sum_{n,n^{\prime}\in\mathcal{L}\cap B_R} \int_{\mathcal{D}} u(n)\overline{v(n^\prime)} e\left(\inner{x}{n-n^{\prime}} \right) {\rm d}m_d(x) \right|\\ & = \frac{1}{m_d\left(B_{R}\right)} \left|\sum_{n\in\mathcal{L}\cap B_R} u(n) \overline{v(n)} \right|
		\end{align*}
		which gives the result.
	\end{proof}
	
	\section{Proof of Theorem \ref{thm:weak_star_measures_bound_affiliation}}
	\label{appendix:proof_of_theorem_weak}
	
	The proof we present here is similar to the one presented in \cite{coquet_kamae_menfrance}, except for minor straightforward modifications. 
	\begin{proof}[Proof of Theorem \ref{thm:weak_star_measures_bound_affiliation}]
		We fix a reference measure $\sigma$ so that $\mu\ll\sigma$ and $\nu\ll \sigma$ and also fix representatives of ${\rm d}\mu/{\rm d}\sigma$ and ${\rm d}\nu/{\rm d}\sigma$. Let $\epsilon>0$ and consider
		\begin{equation*}
		A:= \left\{x\in \mathcal{D} \mathrel{\Big|} \frac{{\rm d}\mu}{{\rm d}\sigma}(x) = 0 \right\}, \qquad B:= \left\{x\in \mathcal{D} \mathrel{\Big|} \frac{{\rm d}\nu}{{\rm d}\sigma}(x) = 0 \right\}\setminus A
		\end{equation*}
		and
		\[
		V_j = V_j(\epsilon) :=  \left\{x\in \mathcal{D} \mathrel{\Big|} (1+\epsilon)^{j-1}\frac{{\rm d}\mu}{{\rm d}\sigma}(x) \leq  \frac{{\rm d}\nu}{{\rm d}\sigma}(x) < (1+\epsilon)^j\frac{{\rm d}\mu}{{\rm d}\sigma}(x) \right\}\setminus \left(A\cup B\right) 
		\]
		for $j\in\mathbb{Z}$. Integrating with respect to ${\rm d}\sigma$ gives 
		\begin{equation}
		\label{eq:inequalities_of_partition}
		(1+\epsilon)^{j-1}\mu\left(V_j\right) \leq \nu\left(V_j\right) \leq (1+\epsilon)^j\mu\left(V_j\right)
		\end{equation}
		for all $j\in\mathbb{Z}$. The collection $\left\{A,B,\left\{V_j\right\}_{j\in\mathbb{Z}}\right\}$ forms a partition of $\mathcal{D}$. Since $\mu\left(\mathcal{D}\right)<\infty$, we may fix $N=N(\epsilon)$ large enough so that
		\[
		\sum_{|j|\geq N} \mu\left(V_j\right) \leq \epsilon^2.
		\]
		With $C:= \bigcup_{|j|\geq N} V_j$, the collection of sets
		$$
		\left\{A,B,C,V_{-N+1},\ldots,V_0,\ldots,V_{N-1}\right\} =: \left\{U_{1},\ldots, U_{2(N+1)}\right\}
		$$
		forms a finite partition of $\mathcal{D}$. Notice that $\mu(U_1) = \nu(U_2) = 0$ and that $\mu(U_3) \leq \epsilon^2$. By outer-regularity of the measures $\mu$ and $\nu$, we may choose open sets $\left\{O_j\right\}_{j=1}^{2(N+1)}$ so that $U_{j}\subset O_j$, $$\max\left\{\mu(O_1), \nu(O_2), \mu(O_3) \right\} \leq \epsilon,$$ and 
		\[
		(1+\epsilon)^{1/2}\mu\left(U_j\right)\geq \mu\left(O_j\right), \quad (1+\epsilon)^{1/2}\nu\left(U_j\right)\geq \nu\left(O_j\right), \qquad \text{for } j\geq 4.
		\]  
		Now, let $\left(f_j\right)_{j=1}^{2(N+1)}$ be a continuous partition of unity subordinated to the open covering $O_j$ of $\mathcal{D}$. For $j\ge 4$ we have
		\begin{equation}
		\label{eq:partition_of_unity_inq_1}
		\int_{\mathcal{D}} f_j {\rm d} \mu \leq \mu\left(O_j\right) \leq (1+\epsilon)^{1/2}\mu\left(U_j\right)
		\end{equation} 
		and 
		\begin{equation}
		\label{eq:partition_of_unity_inq_2}
		\int_{\mathcal{D}} f_j {\rm d} \nu \leq \nu\left(O_j\right)\leq (1+\epsilon)^{1/2}\nu\left(U_j\right).
		\end{equation}
		Furthermore, for $j=1,2,3$ we have
		\begin{equation}
		\label{eq:partition_of_unity_inq_special_sets}
		\left(\int_{\mathcal{D}} f_j {\rm d} \mu \right) \left(\int_{\mathcal{D}} f_j {\rm d} \nu \right)  \leq \epsilon\max\{\mu(\mathcal{D}),\nu(\mathcal{D})\}.
		\end{equation}
		Let $\sigma_t:= \mu_t + \nu_t$. By the Cauchy-Schwarz inequality
		\begin{align*}
		\rho\left(\mu_t,\nu_{t}\right) &= \int_{\mathcal{D}} \left(\frac{{\rm d}\mu_t}{{\rm d}\sigma_t}\right)^{1/2}\left(\frac{{\rm d}\nu_t}{{\rm d}\sigma_t}\right)^{1/2} {\rm d}\sigma_t \\ &= \sum_{j=1}^{2(N+1)}\int_{\mathcal{D}} f_j\left(\frac{{\rm d}\mu_t}{{\rm d}\sigma_t}\right)^{1/2}\left(\frac{{\rm d}\nu_t}{{\rm d}\sigma_t}\right)^{1/2} {\rm d}\sigma_t \\ & \leq \sum_{j=1}^{2(N+1)} \left(\int_{\mathcal{D}}f_j\frac{{\rm d}\mu_t}{{\rm d}\sigma_t}{\rm d}\sigma_t\right)^{1/2}\left(\int_{\mathcal{D}}f_j\frac{{\rm d}\nu_t}{{\rm d}\sigma_t}{\rm d}\sigma_t\right)^{1/2} \\ &= \sum_{j=1}^{2(N+1)} \left(\int_{\mathcal{D}}f_j{\rm d}\mu_t\right)^{1/2}\left(\int_{\mathcal{D}}f_j{\rm d}\nu_t\right)^{1/2}.
		\end{align*}
		Therefore, by the weak-star convergence assumption combined with relations (\ref{eq:partition_of_unity_inq_1}), (\ref{eq:partition_of_unity_inq_2}) and (\ref{eq:partition_of_unity_inq_special_sets}) we see that
		\begin{align*}
		\limsup_{t\to\infty} \rho\left(\mu_t,\nu_{t}\right) & \leq \lim_{t\to\infty} \sum_{j=1}^{2(N+1)} \left(\int_{\mathcal{D}}f_j{\rm d}\mu_t\right)^{1/2}\left(\int_{\mathcal{D}}f_j{\rm d}\nu_t\right)^{1/2} \\ & =\sum_{j=1}^{2(N+1)} \left(\int_{\mathcal{D}}f_j{\rm d}\mu\right)^{1/2}\left(\int_{\mathcal{D}}f_j{\rm d}\nu\right)^{1/2} \\ & \leq (1+\epsilon)^{1/2} \sum_{j=4}^{2(N+1)} \sqrt{\mu\left(U_j\right)\nu\left(U_j\right)} + 3\sqrt{\max\{\mu(\mathcal{D}),\nu(\mathcal{D})\}\epsilon}.
		\end{align*}
		It remains to bound the sum on the right hand side. Using (\ref{eq:inequalities_of_partition}) and the definition of $V_j$ we see that
		\begin{align*}
		\sum_{j=4}^{2(N+1)} \sqrt{\mu\left(U_j\right)\nu\left(U_j\right)}  &= \sum_{j=-N+1}^{N-1} \sqrt{\mu\left(V_j\right)\nu\left(V_j\right)} \\ & \stackrel{(\ref{eq:inequalities_of_partition})}{\leq} \sum_{j=-N+1}^{N-1} (1+\epsilon)^{j/2}\mu\left(V_j\right)  \\ & \leq (1+\epsilon)^{1/2}\sum_{j=-N+1}^{N-1} \left\{\int_{V_{j}} \left(\frac{{\rm d}\mu}{{\rm d}\sigma}\right)^{1/2}\left(\frac{{\rm d}\nu}{{\rm d}\sigma}\right)^{1/2} {\rm d}\sigma \right\} \\ &\leq (1+\epsilon)^{1/2}\rho\left(\mu,\nu\right).
		\end{align*}
		Altogether
		\[
		\limsup_{t\to\infty} \rho\left(\mu_t,\nu_t\right) \leq (1+\epsilon)\rho\left(\mu,\nu\right) + 3\sqrt{\max\{\mu(\mathcal{D}),\nu(\mathcal{D})\}\epsilon}
		\]
		and by taking $\epsilon\to0$ we are done.
	\end{proof}

	
\end{document}